
\documentclass[reqno,11pt]{amsart}
\input{packages.tex}

\begin{document}
\title[Dynamical fitness models]{Dynamical fitness models: evidence of universality classes for preferential attachment graphs}

\author[A. Cipriani]{Alessandra Cipriani}
\author[A. Fontanari]{Andrea Fontanari}
\address{TU Delft (DIAM), Building 28, van Mourik Broekmanweg 6, 2628 XE, Delft, The Netherlands}
\email{A.Cipriani@tudelft.nl, A.Fontanari@tudelft.nl}
\date{\today}

\begin{abstract}
In this paper we define a family of preferential attachment models for random graphs with fitness in the following way: independently for each node, at each time step a random fitness is drawn according to the position of a moving average process with positive increments. We will define two regimes in which our graph reproduces some features of two well-known preferential attachment models: the Bianconi--Barab\'asi and the Barab\'asi--Albert models. We will discuss a few conjectures on these models, including the convergence of the degree sequence and the appearance of Bose--Einstein 
condensation in the network when the drift of the fitness process has order comparable to the graph size.
 \end{abstract}

\keywords{Preferential attachment with fitness, Barab\'asi--Albert model, Bianconi--Barab\'asi model, majorization, orderings, Bose--Einstein condensation}
\subjclass[2000]{05C80, 60E15}

\maketitle

\section{Introduction}
Preferential attachment models (PAMs) are a type of dynamic network that exhibits features observed in many real-life datasets, such as the power-law decay in the tails of the degree distribution~\parencite{remco2016random}. Since the works~\cite{yule1925ii},~\cite{simon1955class}, one of the versions of preferential attachment graphs which became prominent is the Barab\'asi--Albert model~\parencite{barabasialbert}. In the simplest case, at each discrete time step a new node attaches itself to one of the already existing vertices with probability proportional to that vertex's degree. One of the main features of this model is the {\em old-get-richer} phenomenon, for which older vertices tend to accumulate higher degree. PAMs have been extended to allow for different attachment probabilities. In physics it is relevant to look at the following generalisation: each node comes into the network with an additional label called {\em fitness}, sampled at random. Now the attachment probability to a node is not only proportional to its degree, but to its degree times its fitness. This graph is called PAM with fitness, first introduced by~\cite{bianconi2001competition}. We shall henceforth refer to this model as the Bianconi--Barab\'asi model. One of the main interests in fitness models of preferential attachment is due to their link to a well-known phenomenon called ``Bose--Einstein condensation''~\parencite{barabasi2016network}. Roughly speaking, condensation for a graph means that a small fraction of the nodes collects a sum of degrees which is linear in the network size. In physical terms this means that particles in a Bose--Einstein gas (corresponding to nodes in our graph) crowd at the lowest energy level (roughly corresponding to the fitness). It has been shown (\cite{borgs2007first,dereich2016, dereich2017nonextensive,dereich_ortgiese_2014} are some of the many references) that under suitable conditions on the fitness distribution a condensate appears in the Bianconi--Barab\'asi model.
In recent years, applications for preferential attachment models with fitness went beyond physics. On the mathematical side, several variants of the Bianconi--Barab\'asi model have been developed to study condensation and related phenomena~\parencite{garavaglia,haslegrave2016preferential,haslegrave2019condensation}. On the modelling side they have been used to study the power-law exhibited by cryptocurrency transaction networks such as Bitcoins~\parencite{BitcoinFitness}, and citation networks~\parencite{Garavaglia2017}. The reason for this is that one can think that nodes represent agents and connections among them depend on their reputation (the degree) and their skills (the fitness). Studying properties of such networks can lead to better understanding of real-life phenomena.

So far the fitness has been considered fixed in time. Clearly this may not be the case, for example when the skills of an agent increase in time via a learning-by-doing mechanism. Motivated by this, we want to define a family of PAMs with {\em dynamical fitness}. In our networks, the fitness $\F_t(v)$ is allowed to vary over time, independently for each vertex $i$, according to a stochastic process that arises out of the sum of i.i.d. positive random variables $\epsilon$. It turns out that by choosing the right summation scheme we are able to range between the Barab\'asi--Albert and the Bianconi--Barab\'asi model. More precisely, we start by showing that if the amount of increments $\epsilon$ is finite, we are able to compare our model to the Barab\'asi--Albert and Bianconi--Barab\'asi models by showing that some features resemble those of these benchmark cases (eg. expected asymptotic attachment probability). Furthermore, we investigate numerically the behavior of the degree sequence and show it is asymptotically close to that of the Barab\'asi--Albert resp. Bianconi--Barab\'asi model. Indeed the {\em old-get-richer} phenomenon is reinforced by the presence of larger fitness of older nodes.  

We continue by focusing on the Bianconi--Barab\'asi model proving that condensation can be induced by summing sufficiently many increments. Provided the mean increment $\mu_\epsilon$ is less than ${1}/{2}$ we can always find a fitness distribution $\nu(\epsilon)$ such that the Bianconi--Barab\'asi model with that fitness condensates, in particular $\nu(\epsilon)$ is obtained via convolution of the increment distribution. However, for a larger mean increment condensation will not appear regardless of the growth of the fitness (as long as it is bounded in time), thus showing a phase transition at $\mu_\eps={1}/{2}.$ We then conjecture, and provide numerics in support, that this behavior carries over to our model as well.

Finally, we conclude with several open problems and conjectures. In particular, we perform simulations suggesting the appearance of a condensate when the sum of the increments grows linearly in the network size. We inquire whether this phase transition is universal in the increment law.  

Drawing conclusions from our work, both mathematical and empirical evidence hints at the universal behavior of the Barab\'asi--Albert and the Bianconi--Barab\'asi model which appear to be stable under random, but bounded in time, perturbations of the attachment probability.

The main challenge at present is that the study of random networks with fitness has been successfully carried through via a coupling with continuous-time branching processes (CTBP) and generalised P\'olya's urns. However, our time-varying fitness corresponds to a change of the reproduction rate in each family of the associated CTBP, {\em at every birth event}, making the CTPB lose its Markovian properties and independence over families. Using the observation that the degree sequence corresponds to an element of a simplex, the theory of majorization turns out to be a good tool to control the quantities we are interested in. To the best of the authors's knowledge, majorization seems to be newly applied in the context of PAMs, although it has been widely used for other random graphs \parencite{arnold2007}.

\paragraph*{Structure of the article}In Section~\ref{sec:model_res} we describe the model we are considering and state our main Theorems, which are proved, together with auxiliary results, in Section~\ref{sec:proofs}. We describe some conjectures and numerical simulations in Section~\ref{sec:conj}. We finally conclude with the remarks of Section~\ref{sec:concl}.
\paragraph*{Acknowledgments}The authors are grateful to Remco van der Hofstad and C\'ecile Mailler for several stimulating discussions. The first author is supported by the grant 613.009.102 of the Netherlands Organisation for Scientific Research (NWO). The second author acknowledges the support of grant FP7 Marie Curie ITN Project WakEUpCall Nr. 643045.
\subsection*{Notation} For a random variable $X$ we denote its expectation as $\mu_X$. We say that $f(x)\asymp g(x)$ if there exist 
universal constants $c_r,\,c_\ell>0$ such that $c_\ell g(x)\le f(x)\le c_r g(x)$ for all $x.$ We use the standard notation for graphs $G_t=(V_t,\,E_t)$ where $V_t$ denotes the vertex set at time $t$ and $E_t$ denotes the edge set. The degree of a node $v$ at time $t$ in a graph $G=G_t$ is indicated as $\deg_t(v)$. We write $w\to v$ to indicate that node $w$ is attached to node $v$ in the graph $G$, and the bond between the two nodes is written as $(w,\,v)$. 
We will use bold fonts for vectors.
For $\bo{x}=(x_1,\,\ldots,\,x_d)\in\R^d$ let
$$x_{[1]} \ge \cdots\ge  x_{[d]}$$
denote the components of $x$ in decreasing order, and let
$$\bo{x}_\downarrow: = (x_{[1]},\, \ldots\,  x_{[d]})$$
denote the decreasing rearrangement of $\bo x$. We will use the Landau notation always with respect to a time parameter $t\to\infty.$ For a probability law $\nu$ we denote as $\nu^{(m)}$ the $m$-convolution of $\nu$. Finally, $d_{TV}$ denotes the total variation distance.
\section{The model and main results}\label{sec:model_res}
\subsection{Definition of the model}\label{subsec:def}
We will now begin by setting up the definition of our preferential attachment graphs. In particular in this work we will construct random trees where at each time step a new node attaches itself to a previous one according to the preferential attachment rule given in Equation~\eqref{eq:attach_rule}.

The construction algorithm is the following:
\begin{algorithm}[ht!]
\caption{Construction of the preferential attachment graph}\label{euclid}
\begin{algorithmic}[1]
\Procedure{GraphConstruction}{}
\BState \emph{initial step}:
\State $G_1=(V_1,\,E_1) \gets (\{1,\,2\},\,(1,2))$
\State $\F_1\gets (\F_1(1),\,\F_1(2))$
\BState \emph{recursive step}:
\For {$i > 1$} 
\State $\F_i\gets (\F_i(v))$ for all $v\in V_{i-1}$.
\State node $\gets j\in V_{i-1}$ chosen with probability  \begin{equation}\label{eq:attach_rule}
        P(i\to j)=\frac{\F_{i}(j)\deg_{i-1}(j)}{\sum_{v\in V_{i-1}}\F_i(v)\deg_{i-1}(v)}
        \end{equation}
\State $V_i\gets V_{i-1}\cup\{i\}$
\State $E_i=E_{i-1}\cup \{(i,\,node)\}$
\EndFor
\EndProcedure
\end{algorithmic}
\end{algorithm}
  
The choice of the initial graph $G_1$ is arbitrary and does not affect our results. The random variable 
    $$Z_t:=\sum_{v\in V_{t-1}}\F_t(v)\deg_{t-1}(v)$$ 
    is the partition function at time $t$. Note also that we label a vertex by its arrival time in the graph. This mapping is valid since our graph is a tree. Therefore we will interchangeably use $u,\,v,\,w,\ldots$ as names of vertices or as times in the graph evolution without risk of confusion.

    The new feature of our model is that fitnesses randomly vary in time according to a specific discrete-time stochastic process whose definition is given in Definition~\ref{def:fit}.
    \begin{defi}[Fitnesses as stochastic processes]\label{def:fit}Given the graph $G_{t-1}=(V_{t-1},\,E_{t-1}) $ we set $(\F_t(v))_{v\in V_{t-1}}$ as
\begin{equation}\label{eq:fitness_process}
\F_t(v):=\sum_{i=v+1}^{t}\alpha_i\epsilon_i(v),
\end{equation}
where $\bm{\alpha}=(\alpha_i)_{i=1}^t$ is such that $\alpha_i\in\{0,\,1\}$ for all $i=1,\,\ldots,\,t$, and $(\epsilon_i(v))_{i\in\N,\,v\in V_{t-1}}$ is a collection of i.i.d. non-negative random variables.\end{defi}
The law of $\epsilon$ is denoted by $\nu$. In the present work, we assume that $\text{supp}(\nu)\subset [0,\,1]$, as common in the literature on condensation for preferential attachment models with fitness (\cite{borgs2007first,dereich2017nonextensive,haslegrave2019condensation}).

According to the choice of $\bm\alpha$,~\eqref{eq:fitness_process} spans a rich variety of stochastic processes. Some notable ones are 
\begin{enumerate}[label=\alph*),ref=\alph*)]
    \item\label{item:a_m} the i.i.d.~sampling from the law $\nu$ if $\bm{\alpha}=(0,\,\ldots,\,0,\,1)$, namely for every $v$ one has $\F_{i}(v)= \epsilon_i(v).$ Observe that $\F_i(v)$ is independent from $\F_j(v)$ for all $i\neq j$. 
    \item\label{item:b_m} A moving average process $MA(m)$ of order $m$, $m<\infty$, for $\displaystyle \bm{\alpha}=(0,\,0,\,\ldots,0,\,\underbrace{1,\,\ldots,\,1}_{m})$. Observe that, for fixed $v$ and $i$, $\F_{i}(v)$ is independent from $\F_{i+m}(v)$ ($m$-Markov property).
    \item\label{item:c_m} The random walk with positive increments $\eps$ for $\bm{\alpha}=(1,\,\ldots,\,1).$
    \item\label{item:d_m} When $\bm{\alpha}=(\underbrace{1,\,\ldots,\,1}_{m},\,0,\,\ldots,\,0)$ with $m<\infty$ the fitness is a finite sum of increments such that $\F_i(v)=\F_{v+1+m}(v)$ for all $i\ge v+1+m$.
    \item\label{item:e_m} When $\bm{\alpha}=(1,\,0,\,\ldots,\,0)$ we obtain a time-independent fitness such that for every $v$ one has $\F_i(v)=\epsilon_{v+1}(v)$ for all $i\ge v+1$.
\end{enumerate}
The classical Bianconi--Barab\'asi model corresponds to~\ref{item:e_m}. On the other hand of the spectrum, we will provide evidence (see Section~\ref{sec:conj}) to support the conjecture that the model in Case~\ref{item:a_m} resembles a Barab\'asi--Albert model. In our opinion this motivates the choice of the summation scheme in~\eqref{eq:fitness_process} since it allows for a possible interpolation between the Bianconi--Barab\'asi and the Barab\'asi--Albert models.

In the next Subsection we will indeed identify two regimes in which the behavior of our model follows closely that of the Bianconi--Barab\'asi resp. Barab\'asi--Albert graphs.

\subsection{Identification of two regimes}
The relevance of our model lies in the fact that, by a suitable tuning of the vector $\bm{\alpha}$, we can construct two families of graphs that will either behave roughly like the Bianconi--Barab\'asi or the Barab\'asi--Albert models. Therefore, our graph can be used as a tool to test the universality of these two models.

The first step is to partition our family of graphs into suitable regimes where the associated graphs share some features. Subsequently we identify, wherever possible, a representative benchmark for the class which will be either the Bianconi--Barab\'asi or the Barab\'asi--Albert.

In order to define properly the regimes, we introduce a parameter $m\in\N$ that roughly measures the total number of summed increments in~\eqref{eq:fitness_process}. More precisely, $m$ represents the length of a block of $1$'s in vector $\bm{\alpha}$. Note also that $m$ may depend on the graph size. According to the position of the block three fitness categories are identified which in turn define the following graph regimes:
\begin{defi}[R1 graph]\label{def:R1}
Let $m$ fixed. The class $R1$ is the class of all graphs $G_t$ evolving according to Algorithm~\ref{euclid} with fitness \begin{equation}\F_t(v)=\sum_{i=(t-m)\vee (v+1)}^{t}\epsilon_i(v)\end{equation}
for every node $v$.
\end{defi}
\begin{defi}[R2 graph]\label{def:R2}
Let $m$ fixed. The class $R2$ is the class of all graphs $G_t$ evolving according to Algorithm~\ref{euclid} with fitness \begin{equation}\F_t(v)=\sum_{i=v+1}^{(v+m+1)\wedge t}\epsilon_i(v)\end{equation}
for every node $v$.
\end{defi}
\begin{defi}[R3 graph]\label{def:R3}
Let $m(t)$ be a $\N$-valued function of the graph size $t$ which is increasing and unbounded. The class $R3$ is the class of all graphs $G_t$ evolving according to Algorithm~\ref{euclid} with fitness \begin{equation}\F_t(v)=\sum_{i=v+1}^{(v+m(t))\wedge t}\epsilon_i(v)\label{eq:fit_R3}\end{equation}
for every node $v$.
\end{defi}
A few remarks are now due:
\begin{rmk}[On $R1$]
The fitness process of the class $R1$ covers Cases~\ref{item:a_m}-\ref{item:b_m}. We will show that in this regime a Barab\'asi--Albert-like behavior emerges.
\end{rmk}
\begin{rmk}[On $R2$]
The fitness process of the class $R2$ covers Cases~\ref{item:d_m}-\ref{item:e_m}. We will show that in this regime a BB-like behavior emerges. Indeed, we recover a model similar to the BB in the following sense: nodes after $m$ steps stop adding increments and start behaving as if they were in a Bianconi--Barab\'asi graph with fitness law $\nu^{(m)}$.
\end{rmk}
\begin{rmk}[On $R3$]
The fitness process of the class $R3$ includes Case~\ref{item:c_m} which can be obtained by setting $m(t)=t$ in Definition~\ref{def:R3}. Clearly other functions can be used, for example $m(t)=\lfloor{\log t}\rfloor$. As we shall see, according to the speed of the chosen function different behaviors will arise especially regarding the phenomenon of condensation. One may also wonder what happens when one chooses a fitness process as
\begin{equation}
\F_t(v)=\sum_{i=(t-m(t))\vee (v+1)}^{t}\epsilon_i(v).\label{eq:fit_useless}\end{equation}
When $m(t)=t$~\eqref{eq:fit_useless} and~\eqref{eq:fit_R3} coincide. Any other other choice of $m(t)$ in~\eqref{eq:fit_useless} is uninteresting for the purposes of our study (for example, we will see that the phenomenon of condensation is trivial in this case). 
\end{rmk}
Before giving our main results, we conclude this Subsection with heuristics on the ``rich-get-richer'' phenomenon, a characterising property of preferential attachment models. 

Note that for every regime the expected fitness of vertex $v$ at time $t$ is
\begin{equation}\label{eq:ex_fitness}
    \EE[\F_t(v)]=\mu_\eps (m\wedge(t-v-1))
\end{equation}
where $m$ can also depend on $t$. It is clear from~\eqref{eq:ex_fitness} that older nodes have a higher fitness on average, thus reinforcing the idea that the old-get-richer phenomenon is likely to be preserved in our family of models.

We now turn to the main results of the paper. We will start by studying the phenomenon of condensation (defined precisely in Subsection~\ref{subsubsec:CTBP}). Then we will move to the attachment probability.
\subsection{Condensation}
The first result for condensation concerns the classical Bianconi--Barab\'asi model and shows that condensation can be enforced by a convolution operation. More specifically, given a fitness distribution in a Bianconi--Barab\'asi model with mean less than $1/2$, we will prove that the graph in Definition~\ref{def:BBm} whose fitness is the $m$-convolution condensates for $m$ large enough. On the other hand, if the mean is larger or equal to $1/2$, condensation does not appear. This result provides new insight on the heuristic behind condensation. Since this phenomenon requires more ``rarified'' high fitness population~\parencite{borgs2007first}, the convolution, which increases the tail decay rate of the distribution, will favor condensation. On the other side, if the mean is too high, mass will not escape from the upper endpoint of the distribution, countering the appearance of the condensate. Therefore, there is a trade-off between these two mechanisms, which results in a phase transition at $1/2$.

We now make the above reasoning rigorous. We introduce a family of Bianconi--Barab\'asi graphs parametrized by the convolution of the fitness law, that is, given a probability distribution $\nu$ in $[0,\,1]$, we call $BB(m)$ a Bianconi--Barab\'asi graph with fitness law $\nu^{(m)}$.
\begin{defi}[$BB(m)$ graphs]\label{def:BBm}
Let $m\in\N$. We denote $BB(m)$ a preferential graph evolving according to Algorithm~\ref{euclid} with 
\[
\F_t(v)=\sum_{i=1}^m \epsilon_i(v).
\]
\end{defi}
We stress that the fitness distribution is independent of time and is distributed as the convolution $\nu^{(m)}$. Taking $m=1$ one obtains a Bianconi--Barab\'asi model with fitness $\epsilon$. In the sequel, we will refer as ``Bianconi--Barab\'asi models'' to any graph constructed via Algorithm~\ref{euclid}, and we will use the notation $BB(m)$ when it will be important to stress the dependence on the convolution.
\begin{prop}\label{prop:force_cond}
Assume that the probability distribution $\nu$ of the fitness increment $\epsilon$ satisfies
\begin{equation}\label{eq:cond_MCT}
\int_0^1 \frac{x}{1-x}\;\de \nu(x)<\infty.
\end{equation}
and
\begin{equation}\label{eq:media_mu}
\int_0^1 x\;\de \nu(x)<\frac12.
\end{equation}
Then there exists $m^*\in\N$ such that $BB(m^*)$ condensates.
\end{prop}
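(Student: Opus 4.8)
The plan is to reduce condensation of $BB(m)$ to a single moment functional of the convolved fitness, and then show that this functional drops below $1$ once $m$ is large. Recall the condensation dichotomy for a Bianconi--Barab\'asi graph with fitness law $\mu$ supported in $[0,\eta^\ast]$, $\eta^\ast:=\operatorname{ess\,sup}\mu$: the associated continuous-time branching process has a Malthusian parameter $\lambda>\eta^\ast$ solving $\int \frac{\eta}{\lambda-\eta}\,\de\mu(\eta)=1$ precisely in the fit-get-richer phase, so that condensation occurs if and only if no such solution exists, i.e. iff $\Phi(\mu):=\int \frac{\eta}{\eta^\ast-\eta}\,\de\mu(\eta)\le 1$ (see \cite{borgs2007first,dereich2017nonextensive}). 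For $BB(m)$ the fitness is $S_m=\epsilon_1+\dots+\epsilon_m$ with law $\nu^{(m)}$. Working in the standard normalization $\operatorname{ess\,sup}\nu=1$ (the case in which \eqref{eq:cond_MCT} is a genuine restriction), we have $\operatorname{ess\,sup}\nu^{(m)}=m$, so $\Phi(\nu^{(m)})=\EE\!\left[S_m/(m-S_m)\right]$. Setting $Y_i:=1-\epsilon_i\in[0,1]$ and $T_m:=\sum_{i=1}^m Y_i=m-S_m$, this becomes $\Phi(\nu^{(m)})=\EE[m/T_m]-1$, and it suffices to prove $\EE[m/T_m]\to 1/(1-\mu_\epsilon)$: then $\Phi(\nu^{(m)})\to \mu_\epsilon/(1-\mu_\epsilon)$, which is $<1$ exactly under \eqref{eq:media_mu}, so some $m^\ast$ yields $\Phi(\nu^{(m^\ast)})<1$.

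The target value is what the strong law predicts. Since $T_m/m\to \EE[Y]=1-\mu_\epsilon>0$ almost surely, we get $m/T_m\to 1/(1-\mu_\epsilon)$ pointwise, equivalently $S_m/(m-S_m)\to \mu_\epsilon/(1-\mu_\epsilon)$. This already pins down the threshold: the pointwise limit is $<1$ iff $\mu_\epsilon<1/2$. All the content of the proposition therefore lies in justifying the exchange of limit and expectation, the difficulty being the singularity of $1/T_m$ on the rare event that $T_m$ is atypically small, i.e. that almost every increment $\epsilon_i$ lies near $1$.

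I would perform this exchange via the Laplace representation $1/T_m=\int_0^\infty e^{-sT_m}\,\de s$. Writing $\phi(s):=\EE[e^{-sY}]$ and using independence, $\EE[m/T_m]=m\int_0^\infty \phi(s)^m\,\de s=\int_0^\infty \phi(u/m)^m\,\de u$ after the substitution $u=ms$. First observe that \eqref{eq:cond_MCT} is exactly the statement $\EE[1/Y]<\infty$, since $\EE[1/Y]=\int_0^1(1-x)^{-1}\,\de\nu(x)=1+\int_0^1\frac{x}{1-x}\,\de\nu(x)$; in particular $\nu(\{1\})=0$, so $Y>0$ a.s. and every integral is well defined. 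Since $\phi(u/m)^m\to e^{-(1-\mu_\epsilon)u}$ for each $u$ and $\int_0^\infty e^{-(1-\mu_\epsilon)u}\,\de u=1/(1-\mu_\epsilon)$, it only remains to dominate, which I do by splitting at $u=ms_0$ for a fixed $s_0>\EE[1/Y]$. On $\{u\le ms_0\}$, convexity of $s\mapsto\log\phi(s)$ (the cumulant generating function of $-Y$) together with $\log\phi(0)=0$ gives the chord bound $\phi(s)\le \phi(s_0)^{s/s_0}$ on $[0,s_0]$, whence $\phi(u/m)^m\le \phi(s_0)^{u/s_0}=e^{-cu}$ with $c=-s_0^{-1}\log\phi(s_0)>0$, uniformly in $m$; dominated convergence then handles this range. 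On $\{u> ms_0\}$, the Tonelli identity $\int_0^\infty\phi(s)\,\de s=\EE[1/Y]<\infty$ forces the decreasing function $\phi$ to obey $\phi(s)\le \EE[1/Y]/s$, so that $\int_{ms_0}^\infty\phi(u/m)^m\,\de u$ is bounded by a multiple of $(\EE[1/Y]/s_0)^m$ and decays geometrically. Combining the two pieces gives $\EE[m/T_m]\to 1/(1-\mu_\epsilon)$, as required.

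The \emph{main obstacle}, and the only place where \eqref{eq:cond_MCT} enters, is the tail estimate taming the singularity of $1/T_m$: the hypothesis $\EE[1/(1-\epsilon)]<\infty$ is precisely what makes $\phi$ integrable, hence forces the decay $\phi(s)=O(1/s)$ that kills the contribution of the configurations where the sum $T_m$ collapses. Without it the pointwise limit is unchanged but $\EE[m/T_m]$ may diverge, so condensation need not follow; by contrast the remaining ingredients (the condensation criterion, the almost-sure limit, and the small-$u$ chord domination) are routine.
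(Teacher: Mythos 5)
Your proof is correct, and it reaches the result by a genuinely different route than the paper. Both arguments make the same reduction: normalise the $BB(m)$ fitness by its essential supremum $m$, invoke the sufficient criterion \eqref{eq:cond_cecile_dereich}, and identify the limit $\mu_\eps/(1-\mu_\eps)$ of $\EE\left[S_m/(m-S_m)\right]$, which is $<1$ precisely under \eqref{eq:media_mu}. The difference is in how the interchange of limit and expectation is justified. The paper stays within its majorization toolkit: Schur-convexity of $\bo a\mapsto \langle\bo a,\bo x\rangle/(1-\langle\bo a,\bo x\rangle)$ is used to argue that $\EE[\overline X_m/(1-\overline X_m)]$ decreases in $m$, after which the decreasing form of the monotone convergence theorem applies, with \eqref{eq:cond_MCT} supplying integrability of the first term. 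You instead write $1/T_m=\int_0^\infty e^{-sT_m}\,\de s$, reduce everything to the deterministic integral $\int_0^\infty\phi(u/m)^m\,\de u$, and dominate it explicitly: a chord bound from convexity of $\log\phi$ handles $u\le ms_0$, and the tail bound $\phi(s)\le\EE[1/Y]/s$ — which is exactly where \eqref{eq:cond_MCT} enters — kills $u>ms_0$ geometrically. Each approach buys something. The paper's route also yields monotonicity in $m$ of the condensation functional (the content of its preceding lemma: condensation survives further convolution), which your argument does not recover. Your route gives quantitative control of the tail term and, more importantly, needs no pointwise monotonicity at all, thereby sidestepping the delicate step around \eqref{eq:subs_LLN}: there, the Schur-convexity comparison requires the decreasing rearrangement to live in a vector of fixed dimension, while the distributional identity with $\overline X_m$ holds only when one rearranges exactly the first $m$ coordinates, in which case pointwise monotonicity in $m$ fails; your dominated-convergence argument is immune to this issue. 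Two minor points: quote the imported criterion as the sufficient condition ($\Phi(\mu)<1$ implies condensation), as in \eqref{eq:cond_cecile_dereich}, rather than as an equivalence with $\le 1$, since you only use the strict sufficient direction; and note that your explicit normalisation $\mathrm{ess\,sup}\,\nu=1$ is the same convention the paper adopts, so the two proofs share that (necessary) assumption.
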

\begin{rmk}[Example: Beta distribution] The law of the beta distribution $Beta(\alpha,\,\beta)$ with $\alpha<\beta<\alpha+1$ satisfies~\eqref{eq:cond_MCT}-\eqref{eq:media_mu} and does not condensate~\parencite[Appendix C.3]{borgs2007first}. Therefore by Proposition~\ref{prop:force_cond} there exists an $m^*$ such that the $m^*$-convolution condensates.
\end{rmk}
The second result instead is relative to the $R1$ regime of our model. It states that in this case condensation does not occur.
\begin{thm}\label{thm:cond}
No condensation occurs for graphs in Regime~$R1$.
\end{thm}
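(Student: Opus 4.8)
The plan is to show that a bounded, perpetually refreshed fitness can only \emph{dilute} preferential attachment, never concentrate it, and then to import the classical fact that the Barab\'asi--Albert model does not condensate. Write $\bo D_t=(\deg_t(1),\dots,\deg_t(t))$ for the degree sequence. Since every $G_t$ is a tree, $\sum_{v}\deg_t(v)=2(t-1)$ is deterministic, so all the degree sequences I shall compare lie on the same simplex and carry the same total mass. Recall that condensation means that some collection of $o(t)$ vertices carries a linear share of this mass; since, for each $k$, the functional $\phi_k(\bo x)=\sum_{i=1}^{k}x_{[i]}$ is Schur--convex, it suffices to produce a reference model $\bo D'_t$ that does not condensate and for which $\bo D_t\prec \bo D'_t$ in the sense of stochastic majorization, i.e.\ $\EE[\phi(\bo D_t)]\le\EE[\phi(\bo D'_t)]$ for every Schur--convex $\phi$. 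Indeed, taking $\phi=\phi_k$ then yields $\EE\big[\sum_{i=1}^{k}D_{[i]}(t)\big]\le\EE\big[\sum_{i=1}^{k}D'_{[i]}(t)\big]$, and if the right-hand side is $o(t)$ for every $k=o(t)$ the condensate is ruled out.

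The reference is the Barab\'asi--Albert graph $\bo D'_t$, obtained by running Algorithm~\ref{euclid} with $\F\equiv 1$; note that a \emph{constant} fitness cancels in~\eqref{eq:attach_rule} and reduces the dynamics to pure preferential attachment. For this model the classical estimates give $\EE[\deg_t(v)]\asymp (t/v)^{1/2}$, hence $\EE[\phi_k(\bo D'_t)]\asymp\sqrt{tk}=o(t)$ for every $k=o(t)$, so $\bo D'$ does not condensate. It remains to establish $\bo D_t\prec\bo D'_t$, which I would prove by induction on $t$ through a coupling that adds the same new vertex to both graphs at each step. The inductive step splits into two one--step monotonicities with the degrees frozen: first, that attaching one edge with the constant weights $(\deg(v))_v$ preserves the order $\bo d\prec\bo d'$ (preferential attachment is monotone for $\prec$); and second, the key \emph{dilution lemma}, that on a fixed degree vector $\bo d$ the attachment--probability vector produced by the fitness weights $(\F_{t+1}(v)\deg(v))_v$ is majorized by the one produced by the constant weights $(\deg(v))_v$. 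Writing $A$ for a set of heavy--degree vertices and $Z_{t+1}=X+Y$ with $X=\sum_{v\in A}\F_{t+1}(v)\deg(v)$, $Y=\sum_{v\notin A}\F_{t+1}(v)\deg(v)$, the required inequality is $\EE[X/(X+Y)]\le\EE[X]/(\EE[X]+\EE[Y])$. This holds because the $\F_{t+1}(v)$ are bounded, identically distributed and independent across $v$, so that $A$ (few, heavy terms) has a larger variance-to-mean ratio than its complement (an average of many light ones), while $(x,y)\mapsto x/(x+y)$ is concave in $x$ and convex in $y$; in the two--vertex case this is exactly the Cauchy--Schwarz bound $\EE[\F]\,\EE[1/\F]\ge 1$.

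The main obstacle is twofold. The dilution lemma uses that the current fitnesses $\F_{t+1}(v)$ are independent of the past degrees, which is exact only in the fully refreshed case $m=1$; for $m>1$ the window $\F_{t+1}(v)=\sum_{i=t-m+2}^{t+1}\epsilon_i(v)$ shares $m-1$ increments with the fitnesses that built $\deg_t(v)$, inducing a positive correlation. Since this overlap is bounded and only $O(m)$--local in time, I would remove it by first comparing $R1$ with the auxiliary model whose fitnesses are drawn i.i.d.\ from $\nu^{(m)}$ afresh at every step, controlling the discrepancy through the boundedness of the shared increments, and only then applying the clean lemma to the auxiliary model. The second difficulty is to upgrade the variance heuristic above into an exact inequality, valid to all orders and uniformly in $\bo d$; here the sign of the correction is robust precisely because $A$ collects the heavy degrees, and a rearrangement/coupling argument on the i.i.d.\ fitnesses should close it. It is worth stressing that the non-Markovian, non-independent structure of the families---the very reason the continuous-time branching process and P\'olya urn techniques break down for a fitness that changes at each birth event---is what forces this majorization route in place of a direct computation.
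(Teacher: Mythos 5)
Your strategy---majorize the $R1$ degree sequence by the Barab\'asi--Albert one and import the $\sqrt{t}$ degree asymptotics---is genuinely different from the paper's proof, but as written it has real gaps. First, the two pillars of your induction are not proved: (i) the claim that one step of pure preferential attachment is monotone for stochastic majorization, and (ii) the ``dilution lemma''. For (ii) you give a variance heuristic and a two-vertex computation, and you yourself write that a rearrangement argument ``should close it''; in a proof that is a gap, not a step. Moreover, even granting the expectation-level inequality $\EE[X/(X+Y)]\le \EE[X]/(\EE[X]+\EE[Y])$ for top-$k$ sets (which is in fact true; e.g.\ one can show by a swapping argument that $\EE[\F_{t+1}(v)/Z_{t+1}]$ is decreasing in $\deg_t(v)$ and conclude by a Chebyshev-sum type estimate), this only compares \emph{expected} attachment probabilities at a \emph{fixed} degree vector, which is strictly weaker than what your induction consumes: to pass from time $t$ to $t+1$ you need the comparison at the level of conditional laws (a Strassen-type coupling for $\prec$), together with the fact that the one-step preferential attachment average $\bo d\mapsto\sum_v p_v(\bo d)\,\phi(\bo d+\bo e_v)$ (with $\bo e_v$ the $v$-th coordinate vector) is again Schur-convex whenever $\phi$ is. None of this is supplied. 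The case $m>1$ adds yet another unproven comparison (with the auxiliary model refreshing i.i.d.\ $\nu^{(m)}$ fitnesses), justified only by saying the shared increments are ``bounded and $O(m)$-local''; but the correlation between current fitness and current degree is precisely the difficulty, and dismissing it is not a proof.

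Second, there is a mismatch with the definition of condensation the paper actually uses, namely~\eqref{eq:cond_cond}: condensation concerns the degree mass carried by vertices of \emph{high fitness}, $M_{\tilde h}=\sum_v \deg_t(v)\1_{\{\F_t(v)\ge\tilde h\}}$, not by few vertices. For fixed $\tilde h<h$ the set $V_{\tilde h}$ has cardinality of order $P(\F_t(1)\ge\tilde h)\,t$, i.e.\ \emph{linear} in $t$, so your criterion ``top-$k$ sums are $o(t)$ for every $k=o(t)$'' does not by itself rule out~\eqref{eq:cond_cond}. Your route can be repaired: one needs the quantitative statement that the top $\lceil\epsilon t\rceil$ degrees carry at most $C\sqrt{\epsilon}\,t$ in expectation, combined with the paper's observation~\eqref{eq:card_V} that the fraction of high-fitness vertices vanishes as $\tilde h\to h$ (using that $\nu^{(m)}$ has no atom at $h$)---but this is absent from your write-up. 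For contrast, the paper's proof avoids any comparison with the Barab\'asi--Albert model: it decomposes $M_{\tilde h}$ as in~\eqref{eq:dec_M}, uses the $m$-Markov property of $R1$ to make $\F_t(v)$ independent of $\sigma(G_{t-m-1})$, so that $\EE[M_{\tilde h}]\le 2(t-m-2)P(\F_t(1)\ge\tilde h)+m\,\EE[V_{\tilde h}]$, and both terms are $o(t)$ in the double limit $t\to\infty$, $\tilde h\to h$. That argument is a few lines and needs neither degree asymptotics nor any coupling or majorization of degree sequences.
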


When trying to prove a similar result for Regime~$R2$ one faces additional difficulties. The main one is that one needs to control the empirical degree distribution, but the available techniques relying on continuous-time branching processes fail because of the interdependence among the branching rates of the particles represented by the vertices in our context. However, numerical simulations we performed in Section~\ref{sec:conj} show that when $BB(m)$ condensates, the corresponding graph in $R2$, which sums the same $m$ increments, will also condensate.  

\subsection{Evolution of the attachment probability}We now state three Propositions regarding the behavior of the attachment probability for our graphs. The main challenge lies in the fact that the attachment probability depends on the fitness, so it is a random object as well. 

In particular the role of Proposition~\ref{prop:attach_proba} is to justify why in Regime $R1$ we expect a behavior reminiscent of the Barab\'asi--Albert model. Indeed, the refresh of the fitness after $m$ steps will imply that on average we attach new nodes with a probability proportional only to the degree multiplied by a constant, the mean increment. This mirrors the behavior in the Barab\'asi--Albert graph where no fitness is present. More formally, we show
\begin{prop}~\label{prop:attach_proba} 
Let $G_t$ be a graph in Regime~$R1$. If
 $v\in V_t$ is such that $v\le t-m$, then
    \begin{equation}\label{eq:attach_R1}
    \EE\left[P(t+1\to v)\right]\asymp\frac{\EE\left[\deg_{t-m}(v)\right]}{t}+O(t^{-1})
    \end{equation}
    where the error is a.s. in $t$.
\end{prop}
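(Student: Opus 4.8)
The plan is to start from the attachment rule,
\begin{equation*}
P(t+1\to v)=\frac{\F_{t+1}(v)\,\deg_t(v)}{Z_{t+1}},\qquad Z_{t+1}=\sum_{w\in V_t}\F_{t+1}(w)\,\deg_t(w),
\end{equation*}
and to control the numerator and the denominator separately, exploiting the ``refresh'' structure of Regime~$R1$. The numerator will be handled by an independence argument, the denominator by a deterministic upper bound together with a high-probability lower bound of the right order.

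\textbf{Independence.} Since $v\le t-m$, the fitness $\F_{t+1}(v)=\sum_{i=t-m+1}^{t+1}\epsilon_i(v)$ is a function only of increments of time--index $\ge t-m+1$, whereas the graph $G_{t-m}$ — and in particular $\deg_{t-m}(v)$ — is produced by Algorithm~\ref{euclid} from the fitnesses $\F_s(\cdot)$ with $s\le t-m$ (together with the auxiliary variables encoding the attachment choices), hence is a function only of increments of index $\le t-m$. As $(\epsilon_i(w))_{i,w}$ is i.i.d., $\F_{t+1}(v)$ is independent of $\deg_{t-m}(v)$; likewise the variables $(\F_{t+1}(w))_{w\in V_t}$ are mutually independent, each lies in $[0,m+1]$ because $\supp(\nu)\subset[0,1]$, and for $w\le t-m$ each has the fixed positive mean $\mu_\epsilon m$ given by~\eqref{eq:ex_fitness}.

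\textbf{Denominator.} Twice the edge count gives $\sum_w \deg_t(w)=2t$, so boundedness of the fitness yields the deterministic bound $Z_{t+1}\le (m+1)\sum_w\deg_t(w)=2(m+1)t$. For a matching lower bound I discard the degrees, $Z_{t+1}\ge\sum_{w\in V_t}\F_{t+1}(w)$ (every degree is $\ge 1$): this is a sum over the deterministic vertex set of independent variables bounded in $[0,m+1]$ whose mean is of order $t$, so Hoeffding's inequality furnishes constants $c>0$, $c'>0$ with $\p(A_t^c)\le \exp(-c't)$, where $A_t:=\{Z_{t+1}\ge ct\}$.

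\textbf{Assembling, and the main obstacle.} In $m$ steps $v$ gains at most $m$ edges, so $\deg_t(v)=\deg_{t-m}(v)+O(1)$; together with $0\le\F_{t+1}(v)\le m+1$ and the independence above,
\begin{equation*}
\EE\!\left[\F_{t+1}(v)\deg_t(v)\right]=\EE[\F_{t+1}(v)]\,\EE[\deg_{t-m}(v)]+O(1)=\mu_\epsilon m\,\EE[\deg_{t-m}(v)]+O(1).
\end{equation*}
For the lower bound on $\EE[P(t+1\to v)]$ I insert the deterministic estimate $Z_{t+1}\le 2(m+1)t$; for the upper bound I split the expectation over $A_t$ and $A_t^c$, using $Z_{t+1}\ge ct$ on $A_t$ and the trivial $P(t+1\to v)\le 1$ on $A_t^c$, whose probability $\exp(-c't)=O(t^{-1})$ is absorbed into the error. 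This sandwiches $\EE[P(t+1\to v)]$ between constant multiples of $\EE[\deg_{t-m}(v)]/t$ up to an additive $O(t^{-1})$, which is the claim. The delicate point is exactly the denominator: $Z_{t+1}$ is random, positively correlated with the numerator, and admits no deterministic positive lower bound (it even vanishes with positive probability when $\nu(\{0\})>0$). Establishing that $Z_{t+1}\ge ct$ with overwhelming probability — which rests on the independence of the fitnesses across vertices and on their uniform boundedness — is the crux; once the ratio is controlled on the good event, the remaining bookkeeping (replacing $\deg_t$ by $\deg_{t-m}$, the freshly sampled increments, the negligible bad event) is routine.
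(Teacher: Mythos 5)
Your proof is correct, and its skeleton coincides with the paper's: you split $\deg_t(v)=\deg_{t-m}(v)+O(m)$ (the paper's bounded correction term $D_{v,m}$ in \eqref{eq:dec_M}), exploit that the refreshed fitness of $v$ is independent of $\deg_{t-m}(v)$ precisely because $v\le t-m$, and reduce everything to a two-sided order-$t$ control of the partition function. The difference lies in how that control is obtained. The paper invokes Lemma~\ref{lem:ristorante}: its upper bound comes from the rearrangement inequality combined with Schur-convexity/majorization over the degree simplex, its lower bound from discarding degrees and applying the strong law of large numbers, producing the a.s.\ asymptotic bounds \eqref{eq:bound_on_Zt} which are then inserted inside the expectation. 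You instead use the trivial deterministic H\"older-type bound $Z_{t+1}\le (m+1)\sum_{w}\deg_t(w)\le 2(m+1)t$ (bounded increments make the majorization machinery unnecessary in $R1$ with fixed $m$) and Hoeffding's inequality for the lower bound $Z_{t+1}\ge ct$ outside an event of probability $e^{-c't}$, which you absorb into the error via $P(t+1\to v)\le 1$. Your route is more elementary and, in one respect, tidier: bounding $\EE[\,\cdot\,/Z_{t+1}]$ requires controlling the random denominator on essentially the whole probability space, and your explicit good/bad-event split does this cleanly at every finite $t$, whereas the paper inserts an a.s.\ asymptotic bound, whose onset time is random, directly into an expectation --- a step that strictly speaking needs the kind of event-splitting you carry out. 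What the paper's heavier lemma buys is reusability: it treats Regimes $R1$--$R3$ simultaneously (including $m=m(t)\to\infty$) and also feeds the proofs of Propositions~\ref{prop:attach_R2} and~\ref{item:nr_tre}, while your argument is tailored to $R1$, though the same concentration estimate would extend with minor changes.
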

Note that in~\eqref{eq:attach_R1} there is no equality sign, but we are off by a multiplicative factor as the proof will show.

Proposition~\ref{prop:attach_R2} shows that the attachment probability in Regime $R2$ depends on the fitness distribution, resulting in the naming ``Bianconi--Barab\'asi-like'' case.
\begin{prop}\label{prop:attach_R2}
Let $G_t$ be a graph in Regime~$R2$. If
 $v\in V_t$ is such that $v\le t-m$, then
    \begin{equation}\label{eq:attach_R1}
    P(t+1\to v)\asymp\frac{\deg_{t}(v)\F_t(v)}{t}\quad a.s.
    \end{equation}
\end{prop}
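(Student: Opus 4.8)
The plan is to reduce the whole statement to a single almost-sure estimate on the partition function, namely $Z_{t+1}\asymp t$, and then read off the conclusion. First I would apply the attachment rule~\eqref{eq:attach_rule} with index $t+1$ to obtain the exact identity
$$
P(t+1\to v)=\frac{\F_{t+1}(v)\deg_t(v)}{Z_{t+1}},\qquad Z_{t+1}=\sum_{w\in V_t}\F_{t+1}(w)\deg_t(w).
$$
Next I would invoke the defining feature of Regime $R2$: by Definition~\ref{def:R2} the summation window $[v+1,(v+m+1)\wedge(t+1)]$ stops growing once $v\le t-m$, so the fitness has frozen, $\F_{t+1}(v)=\F_t(v)=:\tilde\F(v)$, where $\tilde\F(v)=\sum_{i=v+1}^{v+m+1}\epsilon_i(v)$ is a time-independent sum of a fixed number of i.i.d.\ increments. (At the single transitional index $v=t-m$ the two fitnesses differ by one increment $\epsilon_{t+1}(v)\in[0,1]$, which is immaterial for the comparison up to constants.) Substituting, the numerator becomes $\F_t(v)\deg_t(v)$, and it remains only to control the denominator.

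The heart of the argument is to show $Z_{t+1}\asymp t$ almost surely. For the upper bound I would use that $\supp(\nu)\subset[0,1]$ forces $\F_{t+1}(w)\le m+1$, whence $Z_{t+1}\le (m+1)\sum_{w\in V_t}\deg_t(w)=2(m+1)\,|E_t|\asymp t$, since $G_t$ is a growing, hence connected, tree and so $|E_t|=|V_t|-1\asymp t$. For the lower bound I would discard the degrees by $\deg_t(w)\ge 1$ (connectivity again) to get
$$
Z_{t+1}\ \ge\ \sum_{w\in V_t}\F_{t+1}(w)\ \ge\ \sum_{w=1}^{t-m}\tilde\F(w).
$$
The decisive observation is that the terminal fitnesses $\{\tilde\F(w)\}_w$ are i.i.d.\ with mean $(m+1)\mu_\epsilon$, because distinct nodes draw on disjoint families of increments $\epsilon_\cdot(w)$; the strong law of large numbers then yields $t^{-1}\sum_{w=1}^{t-m}\tilde\F(w)\to (m+1)\mu_\epsilon>0$, so on a probability-one event $Z_{t+1}\ge \tfrac12(m+1)\mu_\epsilon\,t$ for all large $t$. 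Here I implicitly use $\mu_\epsilon>0$, which holds since $\epsilon\ge0$ is non-degenerate.

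Combining, on a single probability-one event one has $c_\ell\, t\le Z_{t+1}\le c_r\, t$ for all large $t$, and therefore
$$
P(t+1\to v)=\frac{\F_t(v)\deg_t(v)}{Z_{t+1}}\asymp\frac{\F_t(v)\deg_t(v)}{t}
$$
for every $v\le t-m$ simultaneously; note that the $v$-dependence cancels, so the bound is uniform in $v$, and the case $\F_t(v)\deg_t(v)=0$ is trivial since both sides vanish. I expect the only genuinely delicate point to be the lower bound on $Z_{t+1}$: one must sidestep the intractable correlation between the degrees and the fitnesses (precisely the obstruction flagged for $R2$ in the condensation discussion) by bounding the degrees crudely from below, and then lean on the \emph{independence of the terminal fitnesses across nodes} to apply the law of large numbers. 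The use of $\asymp$ rather than an exact asymptotic is exactly what makes these one-sided, correlation-free bounds sufficient.
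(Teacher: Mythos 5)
Your proof is correct and, at the structural level, it follows the same route as the paper: the paper disposes of Proposition~\ref{prop:attach_R2} in a single line by invoking Lemma~\ref{lem:ristorante}, which is exactly your central claim, namely a two-sided almost-sure bound on the partition function with deterministic constants ($Z_t\asymp tm$, i.e.\ $\asymp t$ for fixed $m$). Your lower bound is the paper's lower bound in spirit and in substance: discard the degrees via $\deg_t(w)\ge 1$ and apply the strong law of large numbers (the paper applies it to the full i.i.d.\ collection of increments, of which it counts $L=tm-m^2/2-m/2$; you apply it to the i.i.d.\ frozen fitnesses of the old nodes --- an immaterial difference). The genuine divergence is in the upper bound: the paper reorders the degree sequence, applies the rearrangement inequality (Lemma~\ref{lem:rearr}), and then uses Schur-convexity on the simplex $\{\bo x\in\mathscr D_t:\,\sum_i x_i=2(t-1)\}$ together with its maximal element $(t-1,1,\ldots,1)$ to obtain $Z_t\le (t-1)m+\sum_v\F_t(v)$, whereas you simply bound each fitness by $m+1$ (the increments lie in $[0,1]$) and use the handshake identity $\sum_w \deg_t(w)=2|E_t|$, giving $Z_{t+1}\le 2(m+1)|E_t|$. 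Both bounds are of order $tm$, so your more elementary argument fully suffices for an $\asymp$ statement; what the paper's majorization step buys is constants that nearly match the lower bound (of the form $(1+\mu_\eps)tm$ against $\mu_\eps tm$), which fits its programme of advertising majorization as a tool across Regimes $R1$--$R3$ but is not needed here. One caveat you flag and then dismiss a bit quickly: under Definition~\ref{def:R2} the equality $\F_{t+1}(v)=\F_t(v)$ holds only for $v\le t-m-1$, while at the boundary index $v=t-m$ one has $\F_{t+1}(v)=\F_t(v)+\epsilon_{t+1}(v)$; this is harmless for the lower bound (increments are non-negative), but an upper bound $P(t+1\to v)\le c_r\,\F_t(v)\deg_t(v)/t$ with a universal $c_r$ can fail there when $\nu$ puts mass near $0$, since $\F_t(v)$ may then be arbitrarily small while $\epsilon_{t+1}(v)$ is not. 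This off-by-one issue is present in the paper's statement as well and is equally ignored by its one-line proof, so it is not a gap of your argument relative to the paper's; the clean fix is to read the hypothesis as requiring the fitness of $v$ to be frozen between times $t$ and $t+1$, i.e.\ $v\le t-m-1$, or to state the right-hand side with $\F_{t+1}(v)$.
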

Finally, when $m$ depends on $t$ (the $R3$ case) we cannot refer to any benchmark model so it is natural to investigate the attachment probability in this case too. In particular we observe a behavior more similar to the Barab\'asi--Albert model. This is due to the fact that, essentially as a consequence of the law of large numbers, the fitness may be replaced by a constant, its mean, thus cancelling out in the numerator and denominator of the attachment probability.
\begin{prop}\label{item:nr_tre}
Let $G_t$ be a graph in Regime~$R3$. Then
    \begin{equation}\label{eq:attach_R3}
    P(t+1\to v)\asymp \frac{\deg_t(v)}{t}\quad a.s.\text{ and }L^1.
    \end{equation}
    \end{prop}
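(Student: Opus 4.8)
The plan is to work directly with the defining identity
\[
P(t+1\to v)=\frac{\F_{t+1}(v)\,\deg_t(v)}{Z_{t+1}},\qquad Z_{t+1}=\sum_{w\in V_t}\F_{t+1}(w)\,\deg_t(w).
\]
Since $\deg_t(v)$ cancels in the ratio $P(t+1\to v)\big/(\deg_t(v)/t)$, the statement is equivalent to showing that $R_t:=t\,\F_{t+1}(v)/Z_{t+1}\asymp 1$, both almost surely and in $L^1$. Set $M:=m(t+1)$, $N_w:=M\wedge(t+1-w)$ and $\bar M:=M\wedge t$. Because $\supp(\nu)\subset[0,1]$ we have the deterministic bound $\F_{t+1}(w)\le N_w\le\bar M$, and because $G_t$ is a tree, $\sum_{w\in V_t}\deg_t(w)=2|E_t|\asymp t$; multiplying these gives the deterministic upper bound $Z_{t+1}\le\bar M\sum_w\deg_t(w)\le 2\bar M\,t$.

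For the numerator I would note that $\F_{t+1}(v)=\sum_{i=v+1}^{(v+M)\wedge(t+1)}\eps_i(v)$ is a partial sum of $N_v$ i.i.d.\ increments, and that $N_v\to\infty$ as $t\to\infty$ for $v$ fixed. The strong law of large numbers then gives $\F_{t+1}(v)/N_v\to\mu_\eps$ a.s.; an elementary computation shows $N_v/\bar M\to 1$, so $\F_{t+1}(v)\asymp\bar M$ almost surely.

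The crux is the matching lower bound $Z_{t+1}\gtrsim t\,\bar M$. The idea is to sidestep the dependence between the fitnesses $\F_{t+1}(w)$ and the degrees $\deg_t(w)$—precisely the non-Markovianity flagged in the introduction—by discarding the degrees altogether: as $\deg_t(w)\ge 1$,
\[
Z_{t+1}\ge\sum_{w\in V_t}\F_{t+1}(w).
\]
The point is that the summands on the right use disjoint families of increments (one per vertex label), hence are \emph{independent}; each lies in $[0,\bar M]$, and $\EE\big[\sum_w\F_{t+1}(w)\big]=\mu_\eps\sum_w N_w\asymp\mu_\eps\,t\,\bar M$, the last step being an elementary evaluation of $\sum_w N_w$ valid whether or not $m$ grows sub-linearly. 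A Hoeffding bound then yields $\p\big(\sum_w\F_{t+1}(w)<\tfrac12\mu_\eps\sum_w N_w\big)\le 2\e^{-ct}$, which is summable, so Borel--Cantelli gives $Z_{t+1}\gtrsim t\,\bar M$ a.s.\ for all large $t$. Combined with $Z_{t+1}\le 2\bar M\,t$ and $\F_{t+1}(v)\asymp\bar M$, this gives $R_t\asymp 1$ a.s. The $L^1$ bounds use the same ingredients: on the good event $R_t$ is sandwiched by constants via the deterministic inequalities $\F_{t+1}(v)\le\bar M$ and $Z_{t+1}\le 2\bar M\,t$; on the complementary event, of probability $\le\e^{-ct}$, the crude bound $R_t\le t/\deg_t(v)\le t$ makes the stray contribution to $\EE[R_t]$ vanish, while $\EE[R_t]\ge c$ follows because the SLLN lower bound for $\F_{t+1}(v)$ holds with probability at least $\tfrac12$ for large $t$.

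The main obstacle is exactly this lower bound on $Z_{t+1}$: a priori the degree mass could concentrate on atypically low-fitness vertices, and the correlation between $\{\F_{t+1}(w)\}_w$ and $G_t$ makes a conditioning argument delicate. The resolution is that the correlation is irrelevant at the level of orders—dropping the degrees costs only a constant, because in a tree the total degree is comparable to the number of vertices, and the independence of the fitnesses across vertices supplies the concentration of $\sum_w\F_{t+1}(w)$ at the correct scale $t\,\bar M$.
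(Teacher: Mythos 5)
Your proof is correct, and at the top level it follows the same strategy as the paper's: sandwich the partition function at order $t\,\bar M$, apply the strong law of large numbers to the fixed vertex's own fitness (this is exactly where the defining property $m(t)\to\infty$ of Regime~$R3$ enters), and conclude that the ratio $R_t:=t\,P(t+1\to v)/\deg_t(v)=t\,\F_{t+1}(v)/Z_{t+1}$ is bounded above and below, with the $L^1$ statement coming from uniform bounds. The technical implementation, however, is genuinely different. The paper obtains both bounds on the partition function from Lemma~\ref{lem:ristorante}: the upper bound via the rearrangement inequality and Schur-convexity/majorization, the lower bound by starting from $Z_t\ge\sum_w\F_t(w)$ (degrees at least one, as you do) and then applying the SLLN to the enumerated sequence of all increments present in the tree. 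You replace the first by the deterministic observation $\F_{t+1}(w)\le\bar M$ combined with the handshake identity $\sum_w\deg_t(w)=2|E_t|\asymp t$ --- no majorization needed --- and the second by Hoeffding's inequality for the independent per-vertex fitnesses plus Borel--Cantelli. Your route buys two things: robustness, since the concentration argument works at each fixed $t$ and sidesteps the relabeling/nestedness issue implicit in the paper's ``relabel the increments and apply the SLLN'' step (the collection of summed increments changes with $t$ in Regime~$R3$); and quantitative control, since the exponential error probability is precisely what lets you treat $L^1$ via the crude bound $R_t\le t/\deg_t(v)\le t$ off an event of probability at most $\e^{-ct}$, where the paper instead invokes dominated convergence. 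What the paper's heavier lemma buys is reusability: it is stated for Regimes $R1$--$R3$ simultaneously and also drives Propositions~\ref{prop:attach_proba} and~\ref{prop:attach_R2}. Two minor caveats, both shared with the paper: you implicitly need $\mu_\eps>0$, and since the proposition's ``$\asymp$ a.s.\ and $L^1$'' is informal, your formalization (two-sided deterministic bounds on $R_t$ almost surely for large $t$, and on $\EE[R_t]$) differs in form, but not in substance, from the paper's sandwich between random variables asymptotic a.s.\ and in $L^1$ to constant multiples of $\deg_t(v)/t$.
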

\begin{rmk}
We notice in passing that when the fitness is distributed as in~\eqref{eq:fit_useless} the result of Proposition~\ref{item:nr_tre} carries over as well.
\end{rmk}

\section{Proofs of the results}\label{sec:proofs}
\subsection{Preliminaries}
The two main tools we use to study preferential attachment graphs are the majorization order and some results in the theory of branching processes. Although by no means complete, we wish to recall here the basics we are going to employ in our work.
\subsubsection{Majorization} Majorization is a tool which was first introduced in~\cite{hardy1929some}. We refer the interested reader to the monography~\cite{marshall1979inequalities} for a complete overview. 
\begin{defi}[Majorization order]
For two vectors $\mathbf u,\,\bo{v}\in\R^d_+$ we will write $\mathbf u\prec \bo v$ (``$v$ majorizes $u$''), if and only if the following is satisfied:
\[
 \left\{\begin{array}{cc}
   \sum_{i=1}^k u_{[i]}\le \sum_{i=1}^k v_{[i]}, & k=1,\,\ldots,\,d-1  \\
   \sum_{i=1}^d u_{[i]}=\sum_{i=1}^d v_{[i]}.  & 
\end{array} \right.
\]
\end{defi}
We will call
\[
\mathscr D_d:=\{(u_1,\,\ldots,\,u_d)\in \R^d:\,u_1\ge u_2\ge \ldots\ge u_d\}.
\]
Majorization becomes a useful tool in random graphs because it provides a way to control functions whose domain is a simplex, and since the degree sequence satisfies $\sum_{v=1}^t\deg_t(v)=2(t-1)$ for trees we can apply majorization to find maxima and minima of appropriate quantities. In particular, we will look at Schur-convex functions, which are are isotonic with respect to the majorization order (see~\cite[Definition A.1,~Section 3]{marshall1979inequalities}). One such function is the partition function of the attachment probability at time $t$.
\subsubsection{Condensation and continuous-time branching processes}\label{subsubsec:CTBP} Condensation can be defined rigorously in several ways. The first definition we use requires the introduction of the upper end-point of the fitness distribution $\nu$: 
$$ h=h(\F):=\sup\{x:\,\nu(-\infty,\,x)<1\}.$$
To restrict ourselves to interesting cases, we assume there is no atom at the upper end-point, that is, $\nu\{h(\F)\})=0.$
The standard approach to study condensation is the embedding of preferential attachment graphs into continuous-time branching processes.~This technique goes back to~\cite{athreya1968embedding,bhamidi2007universal,janson2004functional} and we adapt here the presentation given in~\cite{dereich2017nonextensive} to the setting of the Bianconi--Barab\'asi model with fitness law $\nu$ supported on $[0,\,1]$.

In~\cite[Theorem~2.1]{dereich2017nonextensive} it is shown that a Bianconi--Barab\'asi model exhibits condensation if 
\begin{equation}\label{eq:cond_cecile_dereich}
\int_0^1\frac{\F}{1-\F} \de  \F<1.
\end{equation}
In this case the {\em weighted empirical} fitness distribution
\[
\Xi_t:=\frac{1}{2t}\sum_{i=1}^t \deg_t(i)\delta_{\F_t(i)}
\]
converges as $t$ goes to infinity to the sum of an absolutely continuous\footnote{With respect to the fitness law.} part,
called the bulk, and a Dirac mass in the essential supremum of the support of the fitness distribution, called the condensate. 

By viewing the CTBP as a reinforced P\'olya's urn, it is also possible to study condensation by establishing the strict positivity in the limit of the cumulative degree for vertices with high fitness. This is in fact the first approach to the mathematical study of condensation, pioneered by~\cite{borgs2007first}, see also~\cite{freeman2018extensive}. Let $V_{\tilde h}:=\{v\in V_{t}:\,\F_t(v)\ge \tilde h\}$ for $\tilde h\le h$. Condensation is based on the behavior of the functional 
\begin{equation}
M_{\tilde h}:=\sum_{v\in V_{t}}\deg_{t}(v)\mathbbm{1}_{\{\F_t(v)\ge \tilde h\}}
\end{equation}
as $t\to \infty$ (firstly) and $\tilde h\to h$ (secondly). Models that do condensate are those for which
\begin{equation}\label{eq:cond_cond}
 \lim_{\tilde h \to h}\liminf_{t\to\infty}\EE\left[\frac{M_{\tilde h}}{t}\right]>0.
\end{equation}
We remark that these conditions to define condensation have been investigated only in the cases of static fitnesses. It is for example possible to see without difficulty that~\eqref{eq:cond_cond} is satisfied for $\F_t$ such that $\lim_{t\to\infty}\F_t=+\infty$ a.s. and for which an $m$-Markov property holds, as one first takes the limit in $t$ and then in $\tilde h$. This is the reason why we are not interested in studying models with fitness process given in~\eqref{eq:fit_useless}.

\subsection{Auxiliary lemmas}
\subsubsection{Condensation}
The next Lemma shows that for the $BB(m)$ model defined in Definition~\ref{def:BBm} condensation is monotone under the convolution operation. Namely, once observed, the phenomenon of condensation is not disrupted by adding more increments in the fitness.
\begin{lemma}Assume $\nu$ has compact support and $m_1<m_2$. If $BB(m_1)$ condensates then $BB(m_2)$ condensates. 
\end{lemma}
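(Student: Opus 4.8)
The plan is to reduce condensation of a static-fitness model to a single scalar quantity and then prove that this quantity is monotone in $m$ by a convexity argument.

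\textbf{Step 1 (reduction to a scalar criterion).} The graph $BB(m)$ of Definition~\ref{def:BBm} is an honest Bianconi--Barab\'asi model with the static fitness $S_m:=\sum_{i=1}^m\eps_i\sim\nu^{(m)}$, so the CTBP characterisation of condensation applies directly. Writing $h:=\sup\supp(\nu)$, the law $\nu^{(m)}$ has upper endpoint $mh$ and no atom there, since $\nu\{h\}=0$ forces $\nu^{(m)}\{mh\}=\nu\{h\}^{m}=0$. Because the attachment rule \eqref{eq:attach_rule} is invariant under multiplying every fitness by a common positive constant, condensation depends on $\nu^{(m)}$ only through its scaling; normalising the upper endpoint to $1$ by dividing $S_m$ by $mh$, the criterion \eqref{eq:cond_cecile_dereich} reads
\[
g(m):=\EE\!\left[\frac{S_m}{mh-S_m}\right]=\EE\!\left[\frac{\bar\eps_m}{h-\bar\eps_m}\right]<1,\qquad \bar\eps_m:=\frac1m\sum_{i=1}^m\eps_i .
\]
Thus $BB(m)$ condensates if and only if $g(m)<1$, and it suffices to show that $g$ is non-increasing in $m$.

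\textbf{Step 2 (monotonicity via a backward martingale).} The key observation is that the sample means $(\bar\eps_m)_{m\ge1}$ form a backward martingale for the decreasing filtration $\mathcal G_m:=\sigma(S_m,\eps_{m+1},\eps_{m+2},\dots)$. Indeed, by exchangeability of $\eps_1,\dots,\eps_{m+1}$ conditionally on their sum one has $\EE[\eps_{m+1}\mid\mathcal G_{m+1}]=S_{m+1}/(m+1)$, whence
\[
\EE[\bar\eps_m\mid\mathcal G_{m+1}]=\tfrac1m\bigl(S_{m+1}-\EE[\eps_{m+1}\mid\mathcal G_{m+1}]\bigr)=\tfrac1m\cdot\tfrac{m}{m+1}\,S_{m+1}=\bar\eps_{m+1}.
\]
Now $\phi(x):=x/(h-x)$ is non-negative and convex on $[0,h)$, since $\phi''(x)=2h/(h-x)^{3}>0$, and $\bar\eps_m<h$ almost surely because $\nu\{h\}=0$. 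Conditional Jensen then gives $\EE[\phi(\bar\eps_m)\mid\mathcal G_{m+1}]\ge\phi(\bar\eps_{m+1})$, and taking expectations yields $g(m)\ge g(m+1)$. Non-negativity of $\phi$ lets us run Jensen with values in $[0,\infty]$, so no a priori integrability is required. Iterating, $g(m_2)\le g(m_1)$ for every $m_2>m_1$.

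\textbf{Step 3 (conclusion).} If $BB(m_1)$ condensates then $g(m_1)<1$ by Step~1; hence $g(m_2)\le g(m_1)<1$, so $BB(m_2)$ condensates. (In particular $g(m_1)<\infty$, so finiteness along the chain is inherited from the hypothesis and does not need to be imposed separately.) The main obstacle is Step~1: transporting the condensation characterisation to $\nu^{(m)}$ despite the \emph{moving} upper endpoint $mh$. One must justify the scale invariance of condensation and verify that the cited CTBP characterisation is a genuine equivalence — i.e.\ that both the sufficiency from \eqref{eq:cond_cecile_dereich} and the matching non-condensation regime underlying Proposition~\ref{prop:force_cond} hold for static fitnesses. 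Once condensation is encoded in $g(m)$, Step~2 is the robust heart of the argument: it is precisely the classical fact that sample means decrease in the convex order, which here expresses that averaging more increments concentrates the fitness away from its top endpoint and thereby can only favour condensation.
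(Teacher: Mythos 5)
Your proof is correct, and it reaches the paper's key monotonicity statement by a genuinely different route. Both arguments reduce the lemma to the same scalar fact: after rescaling the fitness to upper endpoint one, $BB(m)$ condensates precisely when $g(m)=\EE\left[\frac{\bar\eps_m}{h-\bar\eps_m}\right]<1$, so it suffices that $g$ be non-increasing in $m$. The paper proves this monotonicity by majorization: it writes $g(m)$ as $\phi(\bo a_m)$ for the weight vector $\bo a_m=(\nicefrac1m,\ldots,\nicefrac1m,0,\ldots,0)$, verifies via the Schur--Ostrowski criterion that $\phi$ is Schur-convex on $\mathscr D_m$, and uses $\bo a_{m}\prec\bo a_{m-1}$ to get $\phi(\bo a_{m})\le\phi(\bo a_{m-1})$. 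You instead prove it probabilistically: the exchangeability identity $\EE[\bar\eps_m\mid S_{m+1}]=\bar\eps_{m+1}$ (the backward-martingale structure behind the strong law) plus conditional Jensen for the convex map $x\mapsto x/(h-x)$ gives $g(m)\ge g(m+1)$ directly --- this is the classical statement that sample means decrease in the convex order. Your route has some advantages: it yields the full chain $g(m_1)\ge g(m_1+1)\ge\cdots\ge g(m_2)$ at one stroke, avoiding the paper's somewhat loose ``without loss of generality $m_1=1$, $m_2=2$ \ldots by repeated application'' reduction; it needs no differentiation under the integral sign; and it makes explicit two points the paper leaves implicit, namely the scale-invariance of the attachment rule~\eqref{eq:attach_rule} (which justifies writing the criterion for $\nu^{(2)}$ with denominator $2-x$) and the fact that the hypothesis ``$BB(m_1)$ condensates'' is converted into $g(m_1)<1$ only because the CTBP criterion~\eqref{eq:cond_cecile_dereich} is an equivalence, not merely a sufficient condition --- a step the paper also takes, silently, when it says ``we have, by assumption,~\eqref{eq:given}.'' What the paper's approach buys in exchange is reuse: the Schur-convexity computation and the function $\phi$ of~\eqref{eq:def_phi} are exactly what Corollary~\ref{cor:phi} records and what the proof of Proposition~\ref{prop:force_cond} builds on (through the function $\Phi$ of~\eqref{eq:def_Phi}), so the majorization machinery is not incidental but is the paper's announced methodological theme; your Jensen argument, while cleaner for this lemma in isolation, would have to be supplemented by a pointwise (realisation-wise) monotonicity statement to serve that later proof, since there the monotone convergence step requires monotonicity of $\Phi(\bo a_m)$ for a fixed realisation $\bo x_\downarrow$, not just of the expectations.
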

\begin{proof}
Without loss of generality we can assume $m_1=1,\,m_2=2$. The proof will proceed similarly for $1<m_1<m_2$ by a repeated application of the arguments below. We assume also that the law $\nu$ of the fitness of $BB(1)$ is normalised so that $\text{supp}(\nu)=[0,\,1]$. In order to prove the result we will verify the condition of condensation~\eqref{eq:cond_cecile_dereich}. We have, by assumption,
\begin{equation}\label{eq:given}
\int_0^1 \frac{x}{1-x}\de\nu(x)<1
\end{equation}
and we have to show
\begin{equation}\label{eq:to_show_fit}
\int_0^2 \frac{x}{2-x}\de\nu^{(2)}(x)<1.
\end{equation}
Mind that under $\nu^{(2)}$ one has $X=X_1+X_2 $ with each $X_i\sim \nu$ and independent from each other. We introduce the notation $\bo x_m:=(x_1,\,\ldots,\,x_m)$. We rewrite~\eqref{eq:to_show_fit} as
\[
\int_{0}^2 \frac{\frac{x}{2}}{1-\frac{x}{2}}\de\nu^{(2)}(x)<1\iff \iint_{[0,\,1]^2} \frac{\langle \bo a_2,\, \bo x_2\rangle}{1-\langle \bo a_2,\, \bo x_2\rangle}\de\nu(x_1)\de \nu(x_2)<1
\]
where $\bo a_m:=(\underbrace{\nicefrac{1}{m},\,\ldots,\,\nicefrac{1}{m}}_{m},\,0,\,\ldots,\,0)\in \R^{m+\ell}$, $\ell\in\N$. In the present case we choose $m=2,\,\ell=0$.
Consider the function
\begin{align}\phi:\,\mathscr D_2&\to \R\nonumber\\
\bo a_2&\mapsto \iint_{[0,\,1]^2} \frac{\langle \bo a_2,\, \bo x_2\rangle}{1-\langle \bo a_2,\, \bo x_2\rangle}\de\nu(x_1)\de \nu(x_2). \label{eq:def_phi} \end{align}
This function is Schur-convex in $\mathscr D_2$ as one can see by applying~\cite[Theorem~A.3,~Section~3]{marshall1979inequalities}:
\begin{equation}\label{eq:Schur_conv}
\frac{\partial \phi}{\partial \bo (\bo a_2)_i}(\bo a_2)=\1_{\bo (\bo a_2)_i\neq 0}\iint_{[0,\,1]^2}\frac{x_i }{(1-\langle \bo a_2,\, \bo x_2\rangle)^2}\de\nu(x_1)\de \nu(x_2)\ge 0,\quad i=1,\,2.
\end{equation}
The derivative in each $  (\bo a_2)_i$ can be taken inside the integral since the integrand is $C^1$ in the domain $ \mathscr D_2\times [0,\,1)^2$. Note also that $\frac{\partial \phi}{\partial   (\bo a_2)_1}(\bo a_2)=\frac{\partial \phi}{\partial   (\bo a_2)_2}(\bo a_2).$ Therefore by definition of Schur-convexity and the fact that $\bo a_m\succeq \bo a_{m-1}$ it follows that
\[
 \phi(\bo a_2)\le \phi(\bo a_1)=\int_0^1 \frac{x_1}{1-x_1}\de \nu(x_1)\stackrel{\eqref{eq:given}}{<}1
\]
which implies~\eqref{eq:to_show_fit}.
\end{proof}
As a part of the proof (and we will name it Corollary) we have obtained the following:
\begin{cor}\label{cor:phi}
For $i,\,j\in\N$, the function $\phi$ of~\eqref{eq:def_phi} satisfies $\phi(\bo a_i)\le \phi(\bo a_j)$ if $i\ge j$.
\end{cor}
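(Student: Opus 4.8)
The plan is to read Corollary~\ref{cor:phi} as the dimension-free restatement of the single computation already performed in the Lemma. First I would record that the definition~\eqref{eq:def_phi} of $\phi$ makes sense verbatim in any dimension: for $d\in\N$ and $\bo a\in\mathscr D_d$ put
\[
\phi(\bo a)=\int_{[0,1]^d}\frac{\langle \bo a,\,\bo x\rangle}{1-\langle \bo a,\,\bo x\rangle}\prod_{k=1}^d\de\nu(x_k),
\]
a definition that is consistent under the zero-padding embeddings $\R^d\hookrightarrow\R^{d'}$ (the adjoined weights are $0$ and the adjoined variables are integrated against the probability measure $\nu$, so they do not alter the value). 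In particular every $\bo a_i$ sits inside $\mathscr D_d$ for all $d\ge i$, and it suffices to work in $\mathscr D_i$ when comparing $\bo a_i$ with $\bo a_j$, $i\ge j$.

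I would then establish the majorization relation $\bo a_i\prec\bo a_j$ for $i\ge j$. Embedding both vectors in $\R^i$, both have coordinate sum $1$, and for $1\le k\le i$ the decreasing partial sums obey
\[
\sum_{l=1}^k(\bo a_i)_{[l]}=\frac{k}{i}\le\frac{\min(k,j)}{j}=\sum_{l=1}^k(\bo a_j)_{[l]},
\]
since $k/i\le k/j$ when $k\le j$ and $k/i\le 1$ when $j\le k\le i$, with equality of the full sums at $k=i$. Next I would note that the Schur-convexity argument of the Lemma is insensitive to the dimension: differentiating under the integral sign exactly as in~\eqref{eq:Schur_conv} gives
\[
\frac{\partial\phi}{\partial a_k}(\bo a)=\1_{a_k\neq 0}\int_{[0,1]^d}\frac{x_k}{(1-\langle \bo a,\,\bo x\rangle)^2}\prod_{l=1}^d\de\nu(x_l)\ge 0,
\]
and, since $\nu^{\otimes d}$ is permutation invariant, $\phi$ is symmetric and all these partial derivatives coincide on the support; the Schur--Ostrowski criterion~\cite[Theorem~A.3,~Section~3]{marshall1979inequalities} then yields Schur-convexity of $\phi$ on $\mathscr D_d$. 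Being Schur-convex, $\phi$ is isotonic for $\prec$, so $\bo a_i\prec\bo a_j$ forces $\phi(\bo a_i)\le\phi(\bo a_j)$, which is the claim. (Equivalently, one simply iterates the one-step inequality $\phi(\bo a_m)\le\phi(\bo a_{m-1})$ from $m=i$ down to $m=j+1$.)

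The only genuinely delicate point --- and the one I would treat as the main obstacle --- is the integrability bookkeeping: the integrand blows up at the corner where all $x_k$ with $a_k>0$ equal $1$, so $\phi$ need not be finite throughout $\mathscr D_d$, and the interchange of differentiation and integration is only licit where $\phi$ is finite. This is harmless here because the whole chain is anchored at the most concentrated vector, with $\phi(\bo a_1)=\int_0^1\frac{x}{1-x}\de\nu(x)<\infty$ by~\eqref{eq:given}, and monotonicity propagates finiteness downward. To sidestep the differentiation entirely I would instead invoke the reverse-martingale identity $\EE\big[\tfrac{1}{m-1}\sum_{l=1}^{m-1}X_l\,\big|\,S_m\big]=S_m/m$ for i.i.d.\ $X_l\sim\nu$ with $S_m=\sum_{l=1}^m X_l$, together with conditional Jensen applied to the convex map $t\mapsto t/(1-t)$; this gives $\phi(\bo a_m)\le\phi(\bo a_{m-1})$ directly in the extended-real sense and makes the finiteness propagation automatic.
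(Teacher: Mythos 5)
Your proposal is correct, and its first half is essentially the paper's own proof: Schur-convexity of $\phi$ (via the Schur--Ostrowski criterion, differentiating under the integral as in~\eqref{eq:Schur_conv}) combined with a majorization comparison of the weight vectors. Two things you do differently are worth recording. First, you actually \emph{verify} the majorization relation $\bo a_i\prec\bo a_j$ for $i\ge j$, through the partial-sum inequality $k/i\le\min(k,j)/j$ and the zero-padding consistency of $\phi$; the paper merely asserts this step, and writes it as $\bo a_m\succeq\bo a_{m-1}$, which under its own definition of $\prec$ is the reversed direction, so your computation is the correct one. Second, your closing argument is a genuinely different and more robust route: exchangeability gives $\EE\bigl[\tfrac{1}{m-1}\sum_{l=1}^{m-1}X_l\mid S_m\bigr]=S_m/m$, and conditional Jensen for the convex map $t\mapsto t/(1-t)$ yields $\phi(\bo a_m)\le\phi(\bo a_{m-1})$ in $[0,\infty]$ with no smoothness or integrability hypotheses at all, whereas the Schur-convexity route needs $\phi$ finite and $C^1$, which is guaranteed under~\eqref{eq:cond_MCT} since $1-\langle\bo a,\bo x\rangle\ge a_{1}(1-x_{1})$ bounds $\phi$ on the whole simplex --- so your finiteness worry is real but manageable. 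The Jensen route also quietly repairs a soft spot shared by the paper and your first argument: the assertion that the relevant partial derivatives \emph{coincide} holds only at the symmetric points $\bo a_m$, while the Schur--Ostrowski condition must be checked at every point of $\mathscr D_d$ (it does hold, via a symmetrization of the integrand in the pair of variables being compared, but neither the paper nor your first route actually proves it). If you keep only one argument, keep the reverse-martingale/Jensen one.
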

\begin{proof}[Proof of~Proposition~\ref{prop:force_cond}]
Without loss of generality we assume that $\nu$ is such that $BB(1)$ does not exhibit condensation (otherwise $m^*=1$). To show that the model condensates for some $m^*\in\{2,\,3,\,\ldots\}$, we observe that for a random vector $\bo X_m\in[0,\,1]^m$ with $\bo X_m\sim \prod_{i=1}^m\de \nu$ one has \begin{equation}\label{eq:average_kernel}\langle  \bo a_m,\,\bo X_m\rangle=\overline X_m=\sum_{i=1}^m \frac{X_i}{m}.\end{equation}
Now notice that~\eqref{eq:cond_cecile_dereich} can be rewritten using~\eqref{eq:average_kernel} as
\begin{equation}\label{eq:rearr_x}
\EE\left[\frac{\overline X_m}{1-\overline X_m}\right]=\EE\left[\frac{\overline{(\bo X_\downarrow)}_m}{1-\overline{(\bo X_\downarrow)}_m}\right]
\end{equation}
since under $\prod_{i=1}^m\de \nu$ one has
\begin{equation}\label{eq:subs_LLN}
\overline X_m\stackrel{d}{=}\overline{(\bo X_\downarrow)}_m=\frac1m\sum_{i=1}^m X_{[i]}.
\end{equation}
From $\bo X_\downarrow$ fix a realisation $\bo x_\downarrow$. Consider the function
\begin{align}\Phi:\,\mathscr D_m&\to \R\nonumber\\
\bo a&\mapsto  \frac{\langle \bo a,\,\bo x_\downarrow\rangle}{1-\langle \bo a,\,\bo x_\downarrow\rangle}. \label{eq:def_Phi} \end{align} 
As shown for $\phi$ of~\eqref{eq:def_phi} in~\eqref{eq:Schur_conv}, one can prove that $\Phi$ is Schur-convex, so that, for fixed $x_\downarrow$, $\Phi(\bo a_m)$ is decreasing in $m$. This enables us to say that
\begin{equation}\label{eq:last_eq}
\lim_{m\to\infty}\EE\left[\frac{\overline X_m}{1-\overline X_m}\right]\stackrel{\eqref{eq:rearr_x}}{=}\lim_{m\to\infty}\EE\left[\frac{\overline{(\bo X_\downarrow)}_m}{1-\overline{(\bo X_\downarrow)}_m}\right]=\EE\left[\lim_{m\to\infty}\frac{\overline{(\bo X_\downarrow)}_m}{1-\overline{(\bo X_\downarrow)}_m}\right]
\end{equation}
using the monotone convergence theorem in the last step (applicable by~\eqref{eq:cond_MCT} and the monotonicity of $\Phi$). We can now show the right-hand side of~\eqref{eq:last_eq} equals $\mu_\eps/(1-\mu_\eps)$ using~\eqref{eq:subs_LLN} and the strong law of large numbers. This yields that $\left\{\EE\left[\frac{\overline X_m}{1-\overline X_m}\right]:\,m\in\N\right\}$ is a bounded decreasing sequence converging to $\mu_\eps/(1-\mu_\eps)$. This implies the result.
\end{proof}

\subsubsection{Attachment probability}
A classical result we need to quote is the following. Its proof can be found in~\cite[Theorem 368, Section 10.2]{hardy1952inequalities}.
\begin{lemma}[Rearrangement inequality]\label{lem:rearr}
For every $n\in\N$, every sequence of real numbers $x_1\le x_2\le\ldots\le x_n$, $y_1\le y_2\le\ldots\le y_n$ and every permutation $\sigma\in\mathfrak S_n$ it holds that
\[
\sum_{i=1}^n y_i x_{n-i+1}\le \sum_{i=1}^n y_i  x_{\sigma(i)}\le \sum_{i=1}^n y_i x_i.
\]
\end{lemma}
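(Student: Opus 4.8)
The plan is to prove the upper bound first and then deduce the lower bound by a sign reversal, using the classical exchange (``bubble-sort'') argument on the symmetric group. Fix the two nondecreasing sequences $x_1\le\cdots\le x_n$ and $y_1\le\cdots\le y_n$, and for $\sigma\in\mathfrak S_n$ write $S(\sigma):=\sum_{i=1}^n y_i x_{\sigma(i)}$. The goal of the upper bound is to show $S(\sigma)\le S(\id)$ for every $\sigma$, and of the lower bound that $S(\sigma)$ is at least its value at the reversal permutation.

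First I would isolate the effect of a single adjacent swap. Suppose $\sigma\neq\id$; then there is a position $p$ with $\sigma(p)>\sigma(p+1)$, since otherwise $\sigma(1)<\cdots<\sigma(n)$ would force $\sigma=\id$. Let $\sigma'$ be obtained from $\sigma$ by exchanging the values at positions $p$ and $p+1$. A direct computation gives
\[
S(\sigma')-S(\sigma)=\bigl(y_{p+1}-y_p\bigr)\bigl(x_{\sigma(p)}-x_{\sigma(p+1)}\bigr)\ge 0,
\]
because $p<p+1$ forces $y_p\le y_{p+1}$, while $\sigma(p)>\sigma(p+1)$ forces $x_{\sigma(p)}\ge x_{\sigma(p+1)}$ by monotonicity of the $x$-sequence. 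Moreover $\sigma'$ has exactly one fewer inversion than $\sigma$. Iterating this step, I would invoke the standard fact that repeatedly removing such adjacent descents reduces the inversion count by one each time and therefore terminates at the identity after finitely many swaps. Applying the one-step inequality along this chain yields $S(\sigma)\le S(\sigma')\le\cdots\le S(\id)$, which is the upper bound.

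For the lower bound I would apply the upper bound to the reflected sequence. Setting $z_i:=-x_{n+1-i}$, the $z_i$ are again nondecreasing, so the upper bound gives $\sum_i y_i z_{\tau(i)}\le\sum_i y_i z_i$ for every $\tau\in\mathfrak S_n$. Rewriting $z_{\tau(i)}=-x_{n+1-\tau(i)}$ and reparametrising $\sigma(i):=n+1-\tau(i)$, which again ranges over all of $\mathfrak S_n$, this inequality becomes
\[
-\sum_{i=1}^n y_i x_{\sigma(i)}\le -\sum_{i=1}^n y_i x_{n+1-i},
\]
that is, $\sum_i y_i x_{\sigma(i)}\ge\sum_i y_i x_{n-i+1}$, exactly the lower bound.

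The argument is entirely elementary, and there is no serious obstacle. The only point requiring a little care is the combinatorial claim that any $\sigma$ can be driven to $\id$ by adjacent swaps that each strictly decrease the number of inversions, so that the telescoping chain of one-step inequalities actually terminates at the identity; this is the familiar termination of bubble sort and can be taken as standard. Everything else reduces to the sign of the single product $(y_{p+1}-y_p)(x_{\sigma(p)}-x_{\sigma(p+1)})$ and the bijection $\sigma\mapsto n+1-\sigma$ used to transfer the maximum statement to the minimum one.
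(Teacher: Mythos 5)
Your proof is correct. Note, however, that the paper does not prove this lemma at all: it simply cites \cite[Theorem 368, Section 10.2]{hardy1952inequalities} (Hardy--Littlewood--P\'olya) and moves on, since the rearrangement inequality is only used as an off-the-shelf tool in the proof of Lemma~\ref{lem:ristorante}. So your argument is not so much a different route as the route the paper chose to omit. What you wrote is the standard exchange argument, and it is complete: the one-step identity $S(\sigma')-S(\sigma)=(y_{p+1}-y_p)(x_{\sigma(p)}-x_{\sigma(p+1)})\ge 0$ is computed correctly, the termination claim is exactly the statement that swapping an adjacent descent decreases the inversion count by one (so after $\mathrm{inv}(\sigma)$ swaps you reach $\id$), and the reflection $z_i:=-x_{n+1-i}$ together with the reparametrisation $\sigma(i)=n+1-\tau(i)$ is a clean, legitimate way to deduce the lower bound from the upper one rather than rerunning a symmetric swap argument. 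The trade-off is the usual one: the citation keeps the paper short and points to a canonical source, while your version makes the lemma self-contained and elementary, at the cost of half a page; either is acceptable, and if a self-contained proof were wanted in the paper, yours could be inserted essentially verbatim.
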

In order to treat the attachment probability, we need to have control on the partition functions. We will do so using majorization in Regimes~$R1$-$R3$.
\begin{lemma}\label{lem:ristorante}
Let $Z_t:=\sum_{v\in V_{t-1}}\F_{t}(v)\deg_{t-1}(v)$ be the partition function of~\eqref{eq:attach_rule} of models in Regimes $R1$-$R3$. Then the following holds:
\[
(Z_t)^{-1}\asymp (t m)^{-1}\quad a.s.
\]
where the constants in the asymptotic upper and lower bounds are deterministic.
\end{lemma}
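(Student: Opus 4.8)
The plan is to prove the equivalent two-sided bound $Z_t\asymp tm$ almost surely with deterministic constants and then invert. The upper bound is deterministic and requires no probability: in each of the regimes $R1$--$R3$ the fitness $\F_t(v)$ is a sum of at most $m$ (respectively $m(t)$) increments, each supported in $[0,1]$, so $\F_t(v)\le m$ uniformly in $v$ and $\omega$. Since $G_{t-1}$ is a tree on $\Theta(t)$ vertices, $\sum_{v\in V_{t-1}}\deg_{t-1}(v)=2|E_{t-1}|\asymp t$, and therefore $Z_t\le m\sum_{v\in V_{t-1}}\deg_{t-1}(v)\le c_r\,tm$ for a deterministic $c_r$.

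For the lower bound I would first get rid of the degrees using the trivial but crucial fact that every vertex of a growing tree has degree at least $1$, so $Z_t=\sum_{v\in V_{t-1}}\F_t(v)\deg_{t-1}(v)\ge \sum_{v\in V_{t-1}}\F_t(v)$. The right-hand side no longer sees the graph, and this is what makes the argument clean: because the increment families $(\eps_i(v))_i$ carry the vertex label $v$ and are i.i.d., for a \emph{fixed} $t$ the variables $(\F_t(v))_{v\in V_{t-1}}$ are independent, each taking values in $[0,m]$. From $\EE[\F_t(v)]=\mu_\eps\,(m\wedge(t-v-1))$, cf.~\eqref{eq:ex_fitness}, a one-line geometric-sum estimate gives $\EE\big[\sum_{v}\F_t(v)\big]=\mu_\eps\sum_{v}(m\wedge(t-v-1))\asymp \mu_\eps\,tm$ in all three regimes --- for $m$ fixed ($R1$, $R2$) and for $m=m(t)$ up to the extreme $m(t)=t$ ($R3$) alike; here one uses $\mu_\eps>0$.

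The step I expect to be the real obstacle is upgrading this first-moment bound to an almost-sure lower bound valid for all large $t$ with a deterministic constant. Chebyshev is not enough: since $\var(\F_t(v))\le m$ one has $\var\big(\sum_v\F_t(v)\big)\asymp tm$, so Chebyshev only bounds $\p\big(\sum_v\F_t(v)<\tfrac12\EE[\,\cdot\,]\big)$ by a multiple of $(tm)^{-1}$, which is not summable in $t$ when $m$ is bounded, and Borel--Cantelli cannot be applied. The fix is to use that the summands are bounded and invoke Hoeffding's inequality: writing $s=\tfrac12\EE\big[\sum_v\F_t(v)\big]\asymp tm$ and bounding the sum of squared ranges by $\sum_v(m\wedge(t-v-1))^2\le C\,tm^2$, one gets $\p\big(\sum_v\F_t(v)\le\tfrac12\EE[\,\cdot\,]\big)\le \exp\!\big(-2s^2/(C\,tm^2)\big)\le\exp(-c\,t)$, where the powers of $m$ cancel and leave an exponential-in-$t$ tail uniformly across the regime. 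This is summable, so Borel--Cantelli yields $\sum_v\F_t(v)\ge c_\ell\,tm$ for all $t$ large, almost surely, with $c_\ell$ deterministic (of order $\mu_\eps$). Together with the deterministic upper bound this gives $Z_t\asymp tm$ a.s.; the threshold beyond which the inequalities hold may be a random function of $\omega$, but the constants $c_\ell,c_r$ are deterministic, which is exactly the content of the statement. I note that the rearrangement inequality of Lemma~\ref{lem:rearr}, though recalled just above, is not needed for these bounds: discarding the degrees via $\deg_{t-1}(v)\ge 1$ already captures the correct order $tm$.
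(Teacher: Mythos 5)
Your proposal is correct, but it deviates from the paper's proof in both halves, and in each case the deviation is defensible. For the upper bound the paper does not use the pointwise bound $\F_t(v)\le m$ at all: it reorders the degree sequence, applies the rearrangement inequality (Lemma~\ref{lem:rearr}), and then uses Schur-convexity of $\bo x\mapsto\langle\bo x,\bo\F_{t,\downarrow}\rangle$ on the simplex of tree degree sequences to replace the degrees by the extremal element $(t-1,1,\ldots,1)$, arriving at $Z_t\le(t-1)m+\sum_v\F_t(v)$. Your observation that $Z_t\le m\sum_v\deg_{t-1}(v)=2(t-1)m$ follows deterministically from $\supp(\nu)\subset[0,1]$ short-circuits all of this machinery and is perfectly valid; the paper's heavier route is mainly thematic (majorization is its advertised tool) and reduces both bounds to the same random quantity $\sum_v\F_t(v)$, whereas yours makes the upper bound free of probability entirely. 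For the lower bound you and the paper start identically ($\deg_{t-1}(v)\ge1$, so $Z_t\ge\sum_v\F_t(v)$), but then diverge: the paper relabels the roughly $L=tm-m^2/2-m/2$ increments present at time $t$ as a single i.i.d.\ sequence and invokes the strong law of large numbers for its prefix sums, while you compute the first moment via~\eqref{eq:ex_fitness} and upgrade it with Hoeffding plus Borel--Cantelli. Your route is actually the more careful one on the almost-sure step: in Regime~$R1$ the set of increments contributing at time $t$ is a moving window, so the sums $\sum_v\F_t(v)$ for different $t$ are not prefix sums of one fixed sequence, and the paper's SLLN application glosses over this; an exponential concentration bound at each fixed $t$ followed by Borel--Cantelli, as you do, handles the non-nestedness cleanly and uniformly across $R1$--$R3$ (your variance computation showing Chebyshev is insufficient is also on point). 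Both arguments deliver the same conclusion, deterministic constants of order $\mu_\eps$ with a possibly random threshold time, so your proof can stand as a simpler and somewhat more rigorous substitute for the paper's.
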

\begin{proof}
The proof is based on the following two steps:
\begin{enumerate}[leftmargin=*]
    \item first we find two matching a.s. upper and lower bounds for $Z_t$ that involve roughly the same sum of independent and identically distributed random variables.
    \item Secondly we show by the strong law of large numbers that the sum behaves asymptotically like $mt$.
\end{enumerate}
Let us begin with the two bounds. Using the fact that the degree is always at least one, we can bound $Z_t$ from below by
\begin{align}\label{eq:Z_bound_below}
        Z_t\ge \sum_{v\in V_{t-1}} \F_t(v).
    \end{align}
We then look for a similar bound from above. Let us rename the vertices $v\in V_{t-1}$ in such a way that $\deg_t(v)=\deg_{t,\,\downarrow}(v).$ Namely we rearrange the degree sequence in decreasing order.
Therefore for some permutation $\sigma\in\mathfrak S_t$
\begin{equation}\label{eq:Maj_Z}
Z_t=\sum_{v\in V_{t-1}}\deg_{t,\,\downarrow}(v) \F_t(\sigma(v))\le \sum_{v\in V_{t-1}}\deg_{t,\,\downarrow}(v) \F_{t,\,\downarrow}(v)
\end{equation}
by the Rearrangement Inequality of Lemma~\ref{lem:rearr}. Now $(\deg_{t,\,\downarrow}(v))\in \mathscr D^*:=\{\bo x\in\mathscr D_t:\,\sum_{i}x_i=2(t-1)\}.$ The function
\[
\mathscr D^*\ni \bo x\mapsto \langle \bo x,\, \bo{\F}_{t,\,\downarrow}\rangle
\]
is a.s. Schur-convex in $\mathscr D^*$ by~\cite[Theorem~A.3,~Section~3]{marshall1979inequalities}. Therefore it attains its maximum at the maximal element for the majorization order in the simplex $\mathscr D^*$~\parencite[Prop. H.2.a,~Section~3]{marshall1979inequalities}. It is straightforward to identify this element as
    \begin{align}\label{claim:appendix}
   \bo x_{max}&=(t-1,\,\underbrace{1,\,\ldots,\,1}_{t-1}).  
    \end{align}
    Given~\eqref{claim:appendix}, we invoke~\eqref{eq:Maj_Z} to argue that with probability one
    \begin{equation}\label{eq:maj_up_Z}
    Z_t\le (t-1)\F_{t,\,\downarrow}(1)+\sum_{v=2}^t\F_{t,\,\downarrow}(v)\le (t-1)m+ \sum_{v=1}^t\F_{t}(v).
     \end{equation}
     We have thus obtained~\eqref{eq:Z_bound_below} and \eqref{eq:maj_up_Z} which have the same order of magnitude as $t$ grows. We will then study only the asymptotics for~\eqref{eq:Z_bound_below}, the other bound being very similar.

We begin by observing that, for every $v\in V_{t-1},$
    \begin{align}\label{eq:sum_fitness}
        Z_t\ge \sum_v \F_t(v)\ge \sum_{k=1}^L\epsilon_k
    \end{align}
where we have relabeled the increments\footnote{In~\eqref{eq:sum_fitness} we are summing all increments in the tree. Therefore we drop the dependence of $\eps$ on a vertex $v$ since the increments are i.i.d.} and numbered them until
\begin{equation}\label{eq:L}L:=(t-m)m+(m-1)m/2=tm-m^2/2-m/2.\end{equation}
Equation~\eqref{eq:sum_fitness} holds for any regime because the total number of increments is the same. Equation~\eqref{eq:L} is going to infinity for $m\le t$. Then by the strong law of large numbers, for every $\eta>0$ we can find an a.s. $L_0=L_0(\eta)$ such that for all $L\ge L_0$ $$\sum_{k=1}^L\epsilon_k\ge \mu_\epsilon L-\eta.$$
Choose then $t_0=t_0(\eta)$ in a set of probability one such that $L\ge L_0$ for $t\ge t_0$ (this is possible since $L$ is an explicit function of $t$). Therefore we obtain an almost-sure bound of the form
\begin{equation}\label{eq:boundFit}
Z_t\ge \sum_v \F_t(v)\ge \mu_\epsilon(tm-m^2/2-m/2)-\eta.
\end{equation}
This concludes the proof.
\end{proof}

\subsection{Proofs of the main results}
\begin{proof}[Proof of Theorem~\ref{thm:cond}]~
For a preferential attachment model with fitness, we have that
\begin{align}
M_{\tilde h}&=\sum_{v\in V_{t-1}}\deg_{t-1}(v)\1_{\{\F_t(v)\ge \tilde h\}}=\sum_{v}\deg_{t-1-m}(v)\1_{\{\F_t(v)\ge \tilde h\}}\nonumber\\
&+\sum_{v}D_{v,\,m}\1_{\{\F_t(v)\ge \tilde h\}}\label{eq:dec_M}
\end{align}
where $D_{v,\,m}:=\{t'\ge t-m:\,t'\to v\}$ is a random variable bounded almost surely by $m$. Thus recalling the definition $V_{\tilde h}:=\{v\in V_{t}:\,\F_t(v)\ge \tilde h\}$, we have that $\EE[M_{\tilde h}]$ is bounded above by
\begin{align}
P(\F_t(1)\ge \tilde h)&\EE\left[\sum_{v}\deg_{t-m-1}(v)\right]+m \EE[V_{\tilde h}]\nonumber\\
&=P(\F_t(1)\ge \tilde h)2(t-m-2)+m \EE[V_{\tilde h}].\label{eq:fitn_RHS}
\end{align}
Here we have used the fact that $\F_t(1)$ is independent of the sigma algebra $\sigma(G_{t-m-1})$ (the fitness has the $m$-Markov property) and that the sum of the degrees up to $t-m-1$ is deterministic. Furthermore, we notice that, due to the independence of the fitnesses over vertices, one has
\begin{equation}\label{eq:card_V}
    \frac{\EE[V_{\tilde h}]}{ t-1}\sim P\left(\sum_{i=1}^m \epsilon_i\ge \tilde h\right)\to 0
    \end{equation}
    as $t\to \infty$ and $\tilde h \to h$. We justify~\eqref{eq:card_V} since for every node $v\le t-m$ the fitness is a sum of $m$ i.i.d. increments. These two observations combined prove that, for $m$ constant,~\eqref{eq:fitn_RHS} converges to $0$.\qedhere
\end{proof}

\begin{proof}[Proof of Proposition~\ref{prop:attach_proba}]~
We recall the bound
    \begin{equation}\label{eq:bound_on_Zt}
    \mu_\eps m\left(t-\frac{m}{2}\right)(1+o(1))\le Z_t\le \mu_\eps m\left(t-\frac{m}{2}\right)(1+o(1))+tm\quad a.s.
    \end{equation}
    for $t$ large enough from Lemma~\ref{lem:ristorante}. Thus using the left-hand side of~\eqref{eq:bound_on_Zt} one can rewrite the expected attachment probability as
    \begin{align}
 \EE\left[\frac{\deg_{t-1}(v)\F_t(v)}{Z_t}\right]&\ge \EE\left[\frac{\deg_{t-1}(v)\F_t(v)}{\mu_\eps m \left(t-\frac{m}{2}\right)(1+o(1)) }\right]=\EE\left[\frac{\deg_{t-m}(v)\F_t(v)}{\mu_\eps m \left(t-\frac{m}{2}\right)(1+o(1)) }\right]\nonumber\\
 &+\EE\left[\frac{D_{v,\,m}\F_t(v)}{\mu_\eps m \left(t-\frac{m}{2}\right)(1+o(1)) }\right]\label{eq:dec_attach_prob}
    \end{align}
    where $D_{v,\,m}$ is as in~\eqref{eq:dec_M}. The a.s. bound on $D_{v,\,m}$ and the fact that $\EE[\F_t(v)]=\mu_\eps m$ yield that the second summand in~\eqref{eq:dec_attach_prob} is $O(1/t)$ with probability one. As for the first summand, note that $\deg_{t-m}(v)$ and $\F_t(v)$ are independent. Therefore we obtain that
     \begin{align*}
 \EE\left[\frac{\deg_{t-m}(v)\F_t(v)}{Z_t}\right]\ge \EE\left[\frac{\deg_{t-m}(v)}{\left(t-\frac{m}{2}\right)(1+o(1))}\right].
 \end{align*} 
 The other bound can be obtained in the same way from the right-hand side of~\eqref{eq:bound_on_Zt}.
 \end{proof}
 \begin{proof}[Proof of Proposition~\ref{prop:attach_R2}]
 The result follows by applying Lemma~\ref{lem:ristorante} to the partition function of the attachment probability.
 \end{proof}
 \begin{proof}[Proof of Proposition~\ref{item:nr_tre}]
 Using again the right-hand side of~\eqref{eq:bound_on_Zt} we get
 \begin{equation}\label{eq:boundR3_attach}
 \frac{\deg_{t-1}(v)\F_t(v)}{Z_t}\ge \frac{\deg_{t-1}(v)}{(\mu_\eps +1)tm(1+\frac{\mu_\eps}{2(\mu_\eps+1)}\frac{m}{t}+o(1))}\frac{\F_t(v)}{\mu_\eps m}\mu_\eps m.
 \end{equation}
 Observe now that \[
 \frac{\deg_{t-1}(v)}{t}\le 1
 \] 
 and that ${\F_t(v)}/({\mu_\eps m})$ converges to one by the strong law of large numbers with probability one. Hence by dominated convergence theorem one can argue that
\[
\lim_{t\to\infty}\frac{\mu_\eps }{\mu_\eps +1}\EE\left[\left|\frac{\deg_{t-1}(v)}{t(1+\frac{\mu_\eps}{2(\mu_\eps+1)}\frac{m}{t}+o(1))}\left(\frac{\F_t(v)}{\mu_\eps m}-1\right)\right|\right]=0.
\]
This shows that
\[
\frac{\deg_{t-1}(v)\F_t(v)}{Z_t}\ge X^{(\ell)}_t
\]
where $X^{(\ell)}_t$ is a random variable that is asymptotic in $L^1$ and a.s. to $$
\frac{\mu_\eps}{\mu_\eps+1}\frac{ \deg_t(v)}{t(1+\frac{\mu_\eps}{2(\mu_\eps+1)}\frac{m}{t})}.
$$
The a.s. statement is a consequence of the law of large numbers for the term ${\F_t(v)}/({\mu_\eps m})$ going to one.
A similar upper bound, this time using the lower bound of the partition function in~\eqref{eq:bound_on_Zt}, yields that
\[
\frac{\deg_{t-1}(v)\F_t(v)}{Z_t}\le X^{(r)}_t
\]
where $X^{(r)}_t$ is a random variable that is asymptotic in $L^1$ and a.s. to
$$
\frac{ \deg_{t-1}(v)}{\left(t-\frac{m}{2}\right)}.
$$
\end{proof}
\section{Conjectures}\label{sec:conj}
The condensation phenomenon and the attachment probability hint at the fact that the Barab\'asi--Albert and the $BB(m)$ models represent benchmarks. However, these two quantities are not sufficient to establish a full universality result. Therefore we believe that investigating other aspects of interest can strengthen our claim. We will devote this Section to the numerical study of some additional observables of our graph and the relation with the benchmark models.

We focus on the degree distribution and the condensation phenomenon. As for the former, since the results on the attachment probability are local, in the sense that they hold for fixed vertices, looking at the degree distribution gives broader information on the network. As for the latter, we want to verify whether the threshold for the appearance of condensation derived in Proposition~\ref{prop:force_cond} for the $BB(m)$ is mirrored in our model in Regime $R2$. 

Finally, since to the best of the authors' knowledge there is no reference in the literature to preferential attachment models with fitness as in Regime $R3$, we want to shed light on the behavior of condensation in this case.

\subsection{Degree distribution}
We now present the numerical results for the degree distribution in Regimes~$R1$ and $R2$. 

We will show that the total variation distance between the degree distribution of our model and the benchmarks vanishes asymptotically in the graph size. We chose the total variation distance, other than for its numerical tractability, also because it implies the convergence of the laws.
\begin{conj}[Convergence of the degree distribution in $R1$]\label{conj:d_TVAB}
Let $m\in \N$, and let $\mathbf{deg}_t$ be the empirical degree distribution of a graph $G_t$ in Regime~$R1$. Let $\mathbf{deg}_t^{BA}$ be the empirical degree distribution of a Barab\'asi--Albert graph. Then
\begin{equation}\label{eq:dTVAB}
\lim_{t\to\infty}d_{TV}(\mathbf{{deg}}_t,\,\mathbf{deg}_t^{BA})=0.
\end{equation}
The limit is taken a.s. in the fitness realisation for $G_t$.
\end{conj}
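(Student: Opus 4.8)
The plan is to prove the stronger statement that the empirical degree distribution $\mathbf{deg}_t$ of an $R1$ graph converges almost surely to the Barab\'asi--Albert limit $p_k=4/(k(k+1)(k+2))$, establish the same (classical) fact for $\mathbf{deg}_t^{BA}$, and then deduce~\eqref{eq:dTVAB} from the triangle inequality for $d_{TV}$ together with Scheff\'e's lemma. Writing $N_k(t)$ for the number of vertices of degree $k$ at time $t$, the starting point is the one-step recursion
\[
N_k(t+1)=N_k(t)+\1_{\{k=1\}}+\1\{\text{target has degree }k-1\}-\1\{\text{target has degree }k\}.
\]
Conditioning on $G_t$ and taking expectation over the fresh increments entering $\F_{t+1}$ (which, for the $v\le t-m$ are i.i.d.\ across vertices and independent of $G_t$), the crux is to show that the attachment mechanism is asymptotically degree-proportional with the \emph{exact} Barab\'asi--Albert constant, namely
\[
\EE\!\left[\frac{k\sum_{v:\,\deg_t(v)=k}\F_{t+1}(v)}{Z_{t+1}}\,\Big|\,G_t\right]=\frac{k\,N_k(t)}{2t}\,\bigl(1+o(1)\bigr).
\]

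The decisive point, and the main obstacle, is that the fresh fitness $\F_{t+1}(v)$ appears both in the numerator and inside $Z_{t+1}$, and that the a.s.\ bound of Lemma~\ref{lem:ristorante} only pins down $Z_{t+1}$ up to a multiplicative constant, because the majorizing configuration $\bo x_{max}=(t-1,1,\ldots,1)$ is far too pessimistic for a tree whose maximal degree is $o(t)$. To recover the sharp constant $1/(2t)$ I would prove the self-averaging $Z_{t+1}=2\mu_\eps m\,t\,(1+o(1))$ a.s.: conditionally on $G_t$, $Z_{t+1}=\sum_v\F_{t+1}(v)\deg_t(v)$ is a sum of independent terms bounded by $m$, with conditional mean $\mu_\eps m\sum_v\deg_t(v)=2\mu_\eps m(t-1)$ and conditional variance of order $\sum_v\deg_t(v)^2$. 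A short bootstrap closes this: a crude a priori bound $\max_v\deg_t(v)=o(t)$ gives $\sum_v\deg_t(v)^2\le\max_v\deg_t(v)\cdot 2(t-1)=o(t^2)$, hence vanishing relative fluctuations of $Z_{t+1}$; the single term $\F_{t+1}(v)\deg_t(v)=O(k)$ then perturbs $Z_{t+1}$ only at order $1/t$ and decouples from the denominator, yielding $\EE[\F_{t+1}(v)/Z_{t+1}\mid G_t]=(2t)^{-1}(1+o(1))$. The $m$ youngest vertices contribute $O(1)$ to every quantity and are absorbed into the $o(1)$.

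With the recursion in hand, $\EE[N_k(t)]/t$ satisfies the Barab\'asi--Albert difference equation up to errors $o(N_k(t)/t)+O(1/t)$, and the standard stochastic-approximation analysis of the Barab\'asi--Albert tree gives $\EE[N_k(t)]/t\to p_k$. Concentration follows from the Azuma--Hoeffding inequality applied to the Doob martingale that reveals the graph and the fresh fitnesses one time-step at a time: each step changes $N_k$ by at most two, so the bounded-differences bound gives $|N_k(t)-\EE[N_k(t)]|=O(\sqrt{t\log t})$ with probability one via Borel--Cantelli, whence $N_k(t)/t\to p_k$ almost surely for each $k$.

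Finally, to upgrade this pointwise convergence of the mass functions to convergence in total variation I would invoke Scheff\'e's lemma, which needs only that the limit $(p_k)_k$ be a genuine probability distribution (it sums to one) together with a uniform tail bound $\sup_t\sum_{k>K}N_k(t)/t\to0$ as $K\to\infty$; this tightness follows from the same sublinear maximal-degree control used above, or from a uniform first-moment bound on $\mathbf{deg}_t$. The triangle inequality $d_{TV}(\mathbf{deg}_t,\mathbf{deg}_t^{BA})\le d_{TV}(\mathbf{deg}_t,p)+d_{TV}(\mathbf{deg}_t^{BA},p)\to 0$ then yields~\eqref{eq:dTVAB}. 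I expect the genuinely hard part to be the sharp self-averaging and decoupling of $Z_{t+1}$, since this is exactly where the constant-order control of Lemma~\ref{lem:ristorante} is insufficient and a self-consistent bootstrap becomes necessary; this difficulty is presumably why the statement is posed as a conjecture rather than a theorem.
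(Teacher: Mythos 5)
You should first be aware that the paper does \emph{not} prove this statement: it is posed as Conjecture~\ref{conj:d_TVAB} and supported only by Monte Carlo estimates of the total variation distance (Figure~\ref{fig:dTVMean}), with the authors explicitly naming the obstruction --- ``the interdependence among the branching rates'' --- as the reason the usual CTBP/urn techniques fail. Judged on its own, your proposal takes the natural route (sharp self-averaging of $Z_{t+1}$, BA recursion for $N_k$, concentration, Scheff\'e), but it has a genuine gap exactly at the point the authors identify. Your parenthetical claim that the fitnesses $\F_{t+1}(v)$, $v\le t-m$, are independent of $G_t$ is false for every $m>1$: in Regime~$R1$ the fitness $\F_{t+1}(v)$ is built from the increments $\eps_{t+1-m}(v),\dots,\eps_{t+1}(v)$, of which only $\eps_{t+1}(v)$ is fresh; the remaining ones entered $\F_s(v)$ for $s\le t$ and hence the attachment probabilities that produced $G_t$. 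Conditioning on $G_t$ therefore biases these shared increments (a vertex selected at some time $s\in(t-m,t]$ is conditionally more likely to have had a large $\eps$-window), so $\EE[\F_{t+1}(v)\mid G_t]\neq \mu_\eps m$ and your identification of the conditional mean of $Z_{t+1}$ as $\mu_\eps m\sum_v \deg_t(v)$ does not hold. Only the case $m=1$ (i.i.d.\ refresh, Case~\ref{item:a_m}) escapes this objection; for $m>1$ any proof must quantify the selection bias, e.g.\ via a Radon--Nikodym or coupling estimate showing that, away from the $O(m)$ recently chosen vertices, the conditional law of the shared increments is within a factor $1+O(m\deg_t(v)/t)$ of the unconditional one. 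This is not bookkeeping: it is the content of the open problem.

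There are two further gaps. First, your bootstrap is circular as stated: self-averaging of $Z_{t+1}$ requires $\sum_v \deg_t(v)^2=o(t^2)$, for which you invoke ``a crude a priori bound $\max_v\deg_t(v)=o(t)$''; but the only a priori control available is Lemma~\ref{lem:ristorante}, which yields $P(t+1\to v)\le (1+o(1))\,c\,\deg_t(v)/t$ with a constant $c\ge 1/\mu_\eps>1$, and the resulting supermartingale estimate $\deg_t(v)\lesssim t^{c}$ is vacuous because it exceeds the trivial bound $\deg_t(v)\le t$. Ruling out a macroscopic-degree hub is a no-condensation-type statement in its own right (and Theorem~\ref{thm:cond} does not supply it: it controls the degree mass held by \emph{high-fitness} vertices, not the maximum degree), so this step needs a genuine argument --- and it needs precisely the sharp-constant information you are bootstrapping toward. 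Second, the concentration step misuses Azuma: boundedness of the one-step increments of $N_k$ does not bound the increments of the Doob martingale $\EE[N_k(t)\mid \mathcal F_s]$; even for the classical Barab\'asi--Albert tree this requires a coupling argument showing that re-routing one edge perturbs the conditional expectation by $O(1)$, and here a re-routed edge also changes the fitness-dependent attachment weights for the following $m$ steps, so the coupling must be rebuilt. (Your final step is fine: Scheff\'e's lemma on $\N$ needs no tightness, so once $N_k(t)/t\to p_k$ a.s.\ for each $k$, the convergence~\eqref{eq:dTVAB} follows from the triangle inequality and the classical a.s.\ convergence for the BA tree.) In short, your outline correctly isolates where the difficulty lies, but the three items above --- the conditional bias for $m>1$, the sub-linear maximum degree, and the martingale-difference bound --- are the actual substance of the conjecture and remain unproved.
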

In Figure~\ref{fig:dTVMean} we plot the mean of the total variation distance~\eqref{eq:dTVAB} averaged over 100 Monte Carlo simulations with $m=1$ and $\eps\sim U(0,\,1)$ for different graph sizes. Since our result is quenched in the fitness, we are keeping the same realization of the fitnesses and averaging the total variation distance over the Monte Carlo trials.

\begin{figure}[ht!]
  \begin{minipage}[t]{0.45\textwidth}
        \includegraphics[width=\textwidth]{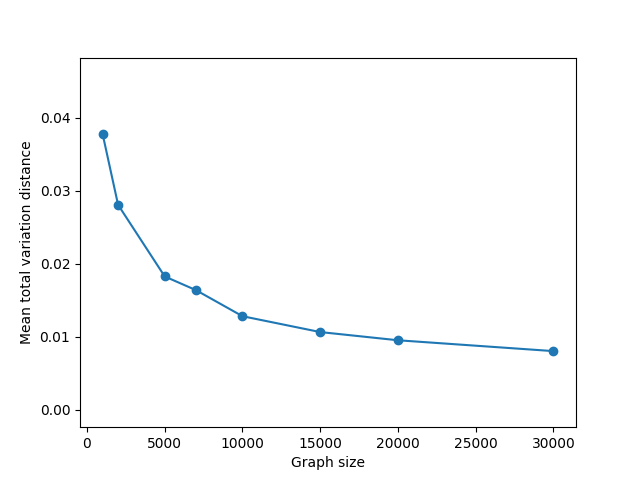} 
        \end{minipage}
        \begin{minipage}[t]{0.45\textwidth}
        \includegraphics[width=\textwidth]{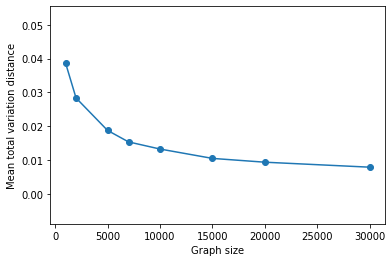} 
        \end{minipage}
        \caption{Left: mean of the total variation distance~\eqref{eq:dTVAB}, $m=1$, $\epsilon\sim U(0,\,1)$. Right: mean of the total variation distance~\eqref{eq:dTVAB}, $m=5$, $\epsilon\sim U(0,\,1)$. Note that it decreases towards zero.}
        \label{fig:dTVMean}
\end{figure}

Due to the convergence of~\eqref{eq:dTVAB} we are also conjecturing that the asymptotic survival function of the degree distribution is close to a power law with exponent $\tau=2$, as in the standard Barab\'asi--Albert model~\parencite[Section~8.4]{remco2016random}. We then compare the tail exponent of the survival function of the degree distribution between our model in $R1$ and the Barab\'asi--Albert model in Figure~\ref{fig:tail_expABNostro3}.
\begin{figure}[ht!]
    \centering
    \includegraphics[width=.45\textwidth]{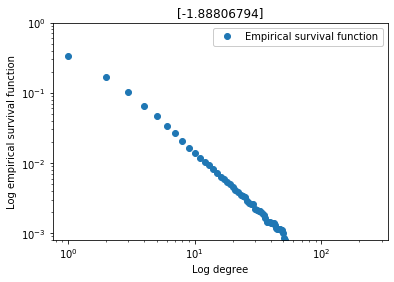}
    \includegraphics[width=.45\textwidth]{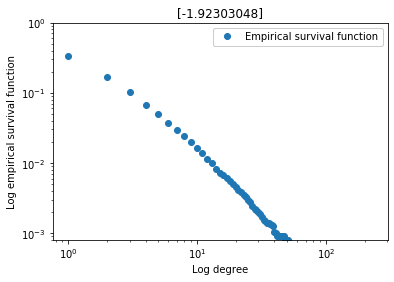}
    \caption{Loglog plot of the empirical survival function for our model (left) in Regime~$R1$, $m=1$, $\epsilon\sim Beta(1,\,3)$ and a Barab\'asi--Albert model (right). The plot looks like a straight line which hints towards a power law behavior~\parencite{cirillo2013your}. On top $\tau$ is computed.}
    \label{fig:tail_expABNostro3}
\end{figure}

\begin{conj}[Convergence of the degree distribution in $R2$]\label{conj:d_TVBB}
Let $m\in\N$, and let $\mathbf{deg}_t$ be the empirical degree distribution of a graph $G_t$ in Regime~R2. Let $\mathbf{deg}_t^{BB(m)}$ be the empirical degree distribution of a $BB(m)$ model with the same parameter $m\in\N$. Then
\begin{equation}\label{eq:dTVBB}
\lim_{t\to\infty}d_{TV}(\mathbf{deg}_t,\,\mathbf{deg}_t^{BB(m)})=0.
\end{equation}
The limit is taken a.s. in the fitness realisation of $G_t$.
\end{conj}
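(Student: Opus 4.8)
The plan is to establish the statement by a quenched coupling of the Regime~$R2$ process $G_t$ with a $BB(m)$ process $\tilde G_t$, both driven by the same increment variables $\eps_i(v)$ and by a common source of attachment randomness, and then to show that under this coupling the two degree sequences disagree on only a sublinear number of vertices. The starting point is the heuristic recorded for Regime~$R2$: by Definition~\ref{def:R2}, as soon as $t\ge v+m+1$ the fitness $\F_t(v)$ freezes to a fixed sum of i.i.d.\ increments whose law is $\nu^{(m)}$, which is exactly the law of the birth-fitness $\tilde f_v$ of node $v$ in the $BB(m)$ model of Definition~\ref{def:BBm}. I would therefore fix the increment realisation -- this is the sense of the ``a.s.\ in the fitness realisation'' in the statement -- and relabel the increments so that the frozen $R2$ fitness of every node $v$ equals its $BB(m)$ birth-fitness $\tilde f_v$. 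After this identification the two models differ only through the fitness of the at most $m$ youngest vertices present at any time, namely those still in their transient phase in $R2$.

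Assuming the two coupled processes have made identical choices up to time $t$, so that their degree sequences coincide, I would next bound the total variation distance between their one-step attachment laws on $V_t$. For every matured vertex $v\le t-m$ the two numerators agree, $\F_t(v)\deg_t(v)=\tilde f_v\deg_t(v)$, so the laws differ only through the $O(m)$ youngest vertices and through the normalisations $Z_{t+1}$ and $\tilde Z_{t+1}$. A vertex that arrived within the last $m$ steps has degree at most $1+m$, so the transient vertices carry numerator mass $O(m^3)=O(1)$ and in particular $|Z_{t+1}-\tilde Z_{t+1}|=O(1)$; combined with the almost sure bound $Z_{t+1}\asymp tm$ of Lemma~\ref{lem:ristorante}, this yields a per-step total variation distance of order $1/t$. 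Using the maximal coupling of the two one-step laws at each step, the conditional probability that the processes select different targets at time $t+1$ is then $O(1/t)$.

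The final and hardest step is to pass from rare mismatched choices to a sublinear number of vertices whose \emph{final} degree differs. A single mismatch changes the degree of two vertices at once, but, since the attachment law depends on the whole degree sequence, the perturbation can propagate to later steps; this is precisely the loss of branching-process independence flagged in the Introduction. I would attempt to close the argument by a re-synchronisation scheme: after each mismatch one re-couples the two processes and tracks a ``defect'' -- one over-counted and one under-counted vertex -- bounding in expectation the number of further vertices whose degree is altered before the defect is reabsorbed. If each defect perturbs only $O(1)$ additional degrees in expectation, then the total number of mismatched steps $K_t$ satisfies $\EE[K_t]=\sum_{s\le t}O(1/s)=O(\log t)$, and the number of vertices with $\deg_t(v)\ne\widetilde\deg_t(v)$ is $o(t)$ almost surely; since $d_{TV}(\mathbf{deg}_t,\mathbf{deg}_t^{BB(m)})\le t^{-1}\#\{v:\deg_t(v)\ne\widetilde\deg_t(v)\}$ for empirical measures, the conclusion follows.

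The main obstacle is exactly this propagation control: quantifying how far a single discrepancy spreads in a reinforced, fitness-weighted dynamics, where a vertex that gains a spurious edge becomes more attractive and may accumulate further spurious edges, so that a naive bound on the defect size need not be summable. A possible alternative route, bypassing trajectory coupling, is to compare the recursions for the expected degree counts $\EE[N_k(t)]$ in the two models: for matured vertices the per-step increments coincide by Proposition~\ref{prop:attach_R2}, the transient vertices contribute only $O(m/t)=O(1/t)$ to each recursion, and one would argue that these vanishing corrections leave the limiting occupation frequencies $p_k$ unchanged, so that the $R2$ empirical degree distribution converges a.s.\ to the same limit as the $BB(m)$ one. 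Upgrading this pointwise convergence of the degree laws to total variation via Scheff\'e's lemma, together with a uniform-integrability estimate on the degree tails, would then give~\eqref{eq:dTVBB}. Either way the crux is to show that the bounded-in-time transient perturbation of the attachment rates is asymptotically invisible at the level of the empirical degree distribution.
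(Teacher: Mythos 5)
First, a point of comparison: the paper does \emph{not} prove this statement. It is stated as Conjecture~\ref{conj:d_TVBB} and supported only by Monte Carlo evidence (Figures~\ref{fig:dTVBBN1}--\ref{fig:wrongdTV}), precisely because, as the Introduction explains, the CTBP/P\'olya-urn machinery breaks down when the attachment rates are time-dependent and interdependent. So your attempt cannot be measured against a paper proof; it must stand on its own, and it does not yet.

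Your first two steps are sound and match the heuristic the paper itself gives in the remark on $R2$: under Definition~\ref{def:R2} the fitness of a node freezes after $m$ steps to a $\nu^{(m)}$-distributed value, so a coupling with $BB(m)$ that shares increments makes the one-step attachment laws differ only through the $O(m)$ transient vertices; with $Z_t\asymp tm$ from Lemma~\ref{lem:ristorante} this gives a per-step total variation of order $1/t$, hence $O(\log t)$ expected mismatches \emph{conditionally on perfect agreement so far}. The genuine gap is the step you yourself flag: controlling how a single mismatch propagates. This is not a technicality that a routine estimate closes, because the dynamics is reinforced. A defect created at time $s$ (one spurious edge on some vertex $v$) elevates the attachment probability to $v$ by order $1/u$ at every later time $u$, and these extra attachments compound multiplicatively, so the expected size of the discrepancy by time $t$ grows like a power $(t/s)^{c}$, where $c$ is tied to the degree-growth exponent of the model. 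Summing over mismatch times gives an expected total discrepancy of order $t^{c}$, which is sublinear only if one can prove $c<1$ uniformly; in the regimes where $BB(m)$ condensates (which the conjecture does not exclude), the relevant exponent approaches $1$ and the defect attached to a condensate vertex can itself be macroscopic. Your phrase ``if each defect perturbs only $O(1)$ additional degrees in expectation'' is therefore an assumption that is most likely false as stated, and the whole argument rests on it. The alternative route via recursions for $\EE[N_k(t)]$ has a second problem: it would at best give convergence of expected degree counts (an annealed statement), whereas the conjecture is quenched, a.s.\ in the fitness realisation; upgrading it requires a concentration estimate for the $R2$ dynamics, which is exactly the tool whose absence makes this a conjecture rather than a theorem in the paper. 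In short: your program is a reasonable and well-motivated attack, but both of its closing steps are open, so it is a proof strategy, not a proof.
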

In Figure~\ref{fig:dTVBBN1} we plot the mean total variation distance~\eqref{eq:dTVBB} over 100 Monte Carlo simulations with $m=5$ resp. $m=10$ and $\epsilon\sim Beta(1,\,3)$ for various graph sizes. As in the AB case, the fitness realisation is kept fixed over the various Monte Carlo trials.
\begin{figure}[ht!]
    \begin{minipage}{0.45\textwidth}
        \includegraphics[width=1\textwidth]{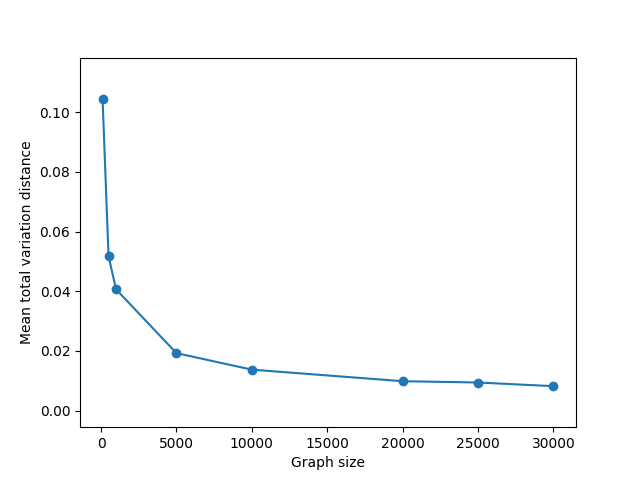} 
        \end{minipage}
        \begin{minipage}{0.45\textwidth}
        \includegraphics[width=1\textwidth]{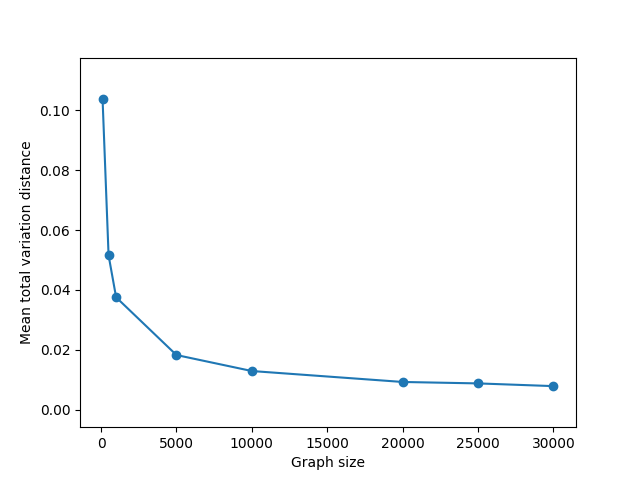} 
        \end{minipage}
        \caption{Left: mean of the total variation distance \eqref{eq:dTVBB} between our model in Regime~$R2$ and $BB(m)$, $m=5$, $\epsilon\sim Beta(1,\,3)$. Right: mean of the total variation distance \eqref{eq:dTVBB} between our model in Regime~$R2$ and $BB(m)$, $m=10$, $\epsilon\sim Beta(1,\,3)$. In both cases it goes to zero.}
        \label{fig:dTVBBN1}
\end{figure}

As a comparison, observe in Figure~\ref{fig:wrongdTV} the behavior of the mean total variation distance between our model in Regime~$R2$ with $m\neq 1$ and a $BB(1)$ averaged over $100$ Monte Carlo simulations.
\begin{figure}[ht!]
    \centering
    \begin{minipage}{.45\textwidth}
    \includegraphics[width=1\textwidth]{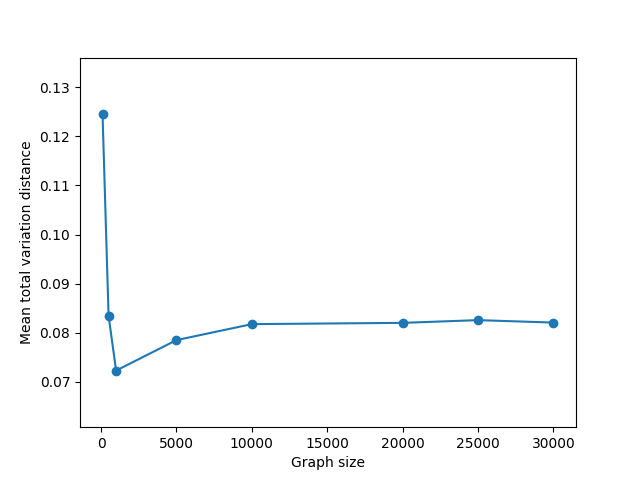}
    \end{minipage}
    \begin{minipage}{.45\textwidth}
    \includegraphics[width=1\textwidth]{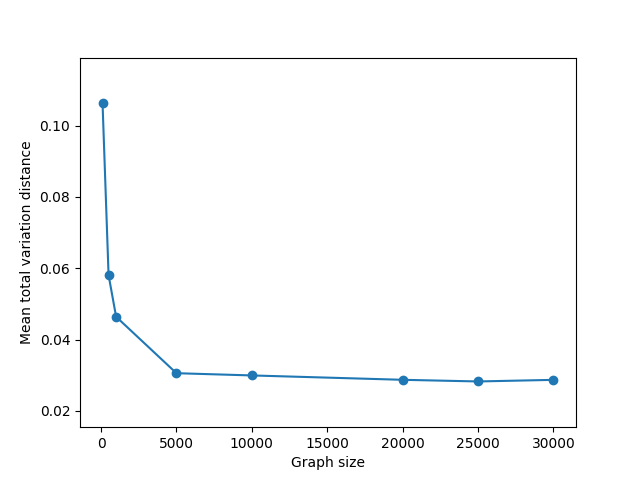}
    \end{minipage}
    \caption{Left: Mean total variation distance~\eqref{eq:dTVBB} between our model with $m=5$ and a $BB(1)$, $\epsilon\sim Beta(1,\,3).$ Right: mean total variation distance~\eqref{eq:dTVBB} between our model with $m=2$ and a $BB(1)$, $\epsilon\sim U(0,\,1).$ In both cases the mean total variation does not approach zero.}
    \label{fig:wrongdTV}
\end{figure}
Again this hints at the fact that the way in which fitnesses increase is substantially uninfluential on the growth of the network, provided we sum finitely many increments. This also shows that the $BB(m)$ model is robust under dynamical perturbations.
\subsection{Condensation}\label{subsec:cond}
We now present the numerical results on condensation in Regimes~$R1$-$R3$. To do so, we will plot the cumulative degree of the nodes grouped by fitness. In this setting, based on the argument outlined in the Introduction, we expect to see condensation when the landscape of the above has more and higher spikes concentrated towards the upper end point of the fitness law.

To begin with, recall that by Theorem~\ref{thm:cond} no condensate appears in Regime~$R1$. Indeed, when picturing condensation using the cumulative degree grouped by fitness (see Figure~\ref{pics:tramaxAB}) one can notice that the position of the spikes varies on the whole support of the distribution. This is due to the the $m$-Markov property. 

\begin{figure}[ht!]
\centering
\includegraphics[width=.3\textwidth]{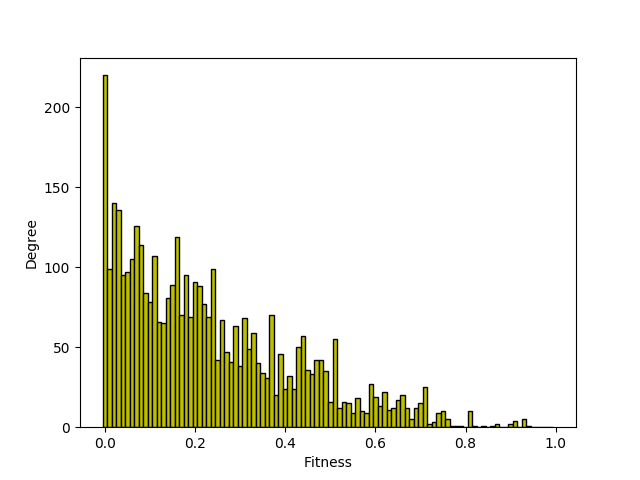}\quad
\includegraphics[width=.3\textwidth]{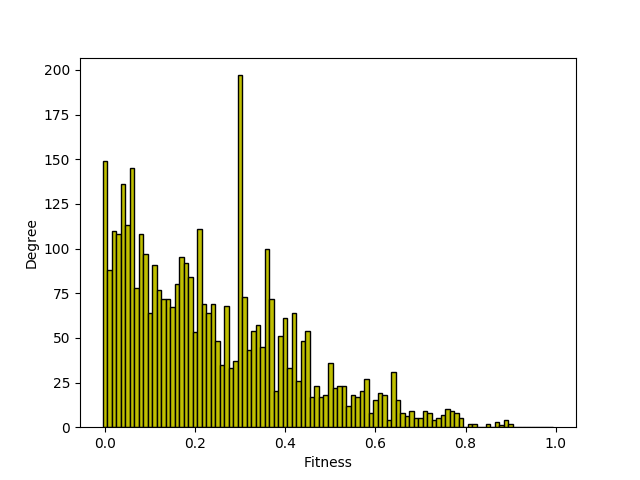}\quad
\includegraphics[width=.3\textwidth]{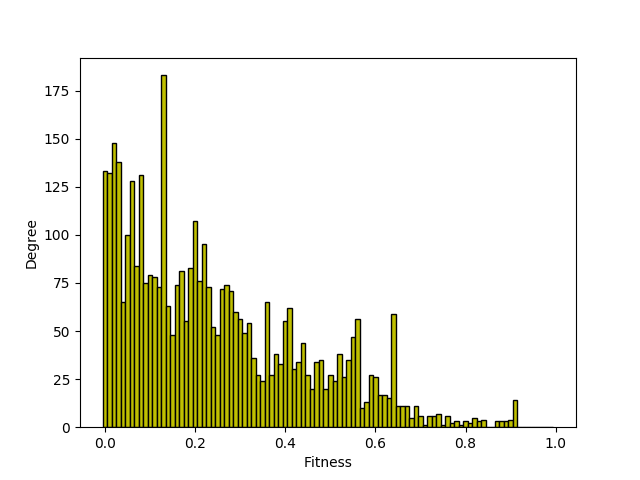}



\caption{Cumulative degree of the nodes grouped by fitness for our model in Regime~$R1$ with $m=1$, $\epsilon\sim Beta(1,\,3)$. Observe that the location of the highest spike of the degree does not escape towards the supremum of the fitness but is randomly shuffled.
}
\label{pics:tramaxAB}
\end{figure}

We now turn our attention to Regimes $R2$ and $R3$. 
\subsection{Condensation for $R2$}  
Given Conjecture~\ref{conj:d_TVBB} on the asymptotic degree distribution, we formulate a conjecture on the condensation for $R2$ models. 
\begin{conj}
Let $m\in\N$ and let $G_t$ be a preferential attachment graph in Regime~$R2$. Then $G_t$ condensates if $BB(m)$ condensates.
\end{conj}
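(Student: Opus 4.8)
The plan is to grant Conjecture~\ref{conj:d_TVBB} and then reduce the claim to a comparison between the weighted empirical fitness distributions of the two models. First I would record the structural feature that drives everything: in Regime~$R2$ the fitness of a node \emph{freezes}. By Definition~\ref{def:R2}, for every $v$ and every $t$ large enough the fitness $\F_t(v)$ stabilises at a value $F_\infty(v)$ which is a fixed sum of i.i.d.\ increments, independent across $v$ and equal in law to the $m$-fold convolution fitness $\nu^{(m)}$ of the corresponding $BB(m)$ graph of Definition~\ref{def:BBm}. I would therefore couple $G_t\in R2$ and its $BB(m)$ companion on the same family $(\epsilon_i(v))$, so that the \emph{frozen} fitnesses coincide node by node. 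With this coupling the two models differ only during the transient window of length $m$ following each birth, in which the $R2$ fitness is still growing towards $F_\infty(v)$ and hence is strictly smaller than in $BB(m)$.

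Next I would localise condensation to the frozen nodes. Working with the functional $M_{\tilde h}=\sum_{v}\deg_t(v)\1_{\{\F_t(v)\ge\tilde h\}}$ and the criterion~\eqref{eq:cond_cond}, I would split the sum into \emph{young} nodes $v>t-m-1$, still inside the transient, and \emph{old} (frozen) nodes $v\le t-m-1$. A young node has existed for at most $m$ steps, so $\deg_t(v)\le m+1$, and there are at most $m+1$ of them; hence their total contribution to $M_{\tilde h}$ is $O(m^2)=O(1)$ and disappears after dividing by $t$. Consequently the limit in~\eqref{eq:cond_cond}, equivalently the condensate mass of $\Xi_t=\frac{1}{2t}\sum_i\deg_t(i)\delta_{\F_t(i)}$, is governed entirely by old nodes, whose fitness in the coupling equals the $BB(m)$ value $F_\infty(v)$.

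The comparison step is where I expect the argument to live. For frozen nodes Proposition~\ref{prop:attach_R2} gives $P(t+1\to v)\asymp\deg_t(v)\F_t(v)/t$ almost surely, the \emph{same} reinforcement law as in $BB(m)$; the two processes thus obey identical dynamics once all the nodes involved have frozen, and differ only through the degree configuration inherited from the transient. Since in $R2$ young nodes carry a smaller fitness than their $BB(m)$ counterparts, they are less attractive during their transient, so edges are diverted towards the older, high-fitness nodes. Heuristically this makes the $R2$ degree sequence \emph{more spread out} --- i.e.\ it should majorise that of $BB(m)$ --- and should push at least as much weighted mass towards the upper endpoint $h$. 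I would try to make this precise by showing that the high-fitness cumulative degree $M_{\tilde h}$ of the $R2$ graph stochastically dominates that of $BB(m)$, so that positivity of $\lim_{\tilde h\to h}\liminf_t\EE[M_{\tilde h}/t]$ for $BB(m)$ forces positivity for $R2$; this is exactly the required implication.

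The main obstacle is turning the ``more spread out'' heuristic into a rigorous monotonicity statement. Two difficulties compound. First, the degree sequences must be compared in the majorization order under a coupling, but the attachment is a reinforced, and during the transient \emph{non-Markovian and coupled across families}, process --- precisely the feature that defeats the continuous-time branching process and P\'olya urn machinery used for static fitness. One would need to control how a bounded ($O(m)$) degree perturbation incurred at each node's freezing time propagates, under the reinforcing dynamics, to the asymptotic measure $\Xi_t$; a direct two-graph coupling or a comparison of time-inhomogeneous generalised P\'olya urns seems unavoidable here. Second, the functional detecting condensation is a \emph{fitness-weighted} sum of degrees rather than a symmetric function, so ordinary Schur-convexity does not apply directly and one must verify isotonicity of $M_{\tilde h}$ under the relevant weighted majorization. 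Resolving these two points would upgrade the conjecture to a theorem; granting Conjecture~\ref{conj:d_TVBB} reduces the first to the joint $(\text{degree},\text{fitness})$ empirical law rather than its degree marginal, which is the natural strengthening to aim for.
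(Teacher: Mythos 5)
This statement is one of the paper's \emph{conjectures}: the authors give no proof of it, support it only with the simulations of Subsection~\ref{subsec:cond} (Figures~\ref{pics:tramax_m1_Beta11}--\ref{fig:phase_trans_nostro}), and explicitly state why a proof is currently out of reach --- the CTBP and P\'olya urn machinery breaks down because the branching rates of different families are interdependent during the transient. Your proposal should therefore be judged as a research plan, and as such it is sensible and close in spirit to what the authors themselves identify as the difficulty: the coupling of the frozen fitnesses with $\nu^{(m)}$, the observation that young nodes contribute only $O(m^2)=O(1)$ to $M_{\tilde h}$, and the reduction of condensation to a comparison of the weighted functional \eqref{eq:cond_cond} are all correct and natural first steps. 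But it is not a proof, and you say so yourself; the two steps on which everything rests are open.

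Concretely, the gaps are these. First, you grant Conjecture~\ref{conj:d_TVBB}, which is itself only numerically supported, so at best you would obtain a conditional statement; moreover, even granted, total variation convergence of the \emph{degree marginal} is too weak for your purposes, since condensation is a statement about the joint placement of large degrees at high fitness (the measure $\Xi_t$, or $M_{\tilde h}$), and two graphs can have asymptotically identical degree distributions while distributing those degrees over fitness values in completely different ways --- you acknowledge this at the end, which effectively concedes that the granted conjecture does not help. Second, and more seriously, the stochastic dominance step is not only unproven but of doubtful sign: under your coupling, a node of the $R2$ graph with high frozen fitness exits its transient window with a (stochastically) \emph{smaller} degree than its $BB(m)$ twin, because it was less attractive while growing; the preferential attachment reinforcement then propagates this deficit for all later times. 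So the diverted edges boost high-fitness \emph{old} nodes but penalise high-fitness \emph{newly frozen} nodes, and the net effect on $M_{\tilde h}$ is not monotone in any obvious way --- this is exactly the kind of early-perturbation sensitivity that makes reinforced processes hard, and it is why ``more spread out'' does not translate into a majorization statement for the fitness-weighted, non-symmetric functional $M_{\tilde h}$ (Schur-convexity, the paper's main tool in Lemma~\ref{lem:ristorante}, does not apply). Resolving these points is precisely the open problem; your proposal maps it accurately but does not close it.
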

We will support the above conjecture with a few simulations. We recall~\parencite[Appendix C.3]{borgs2007first} that in a $BB(1)$ model the fitness distribution $Beta(\alpha,\,\beta)$ condensates if and only if $\beta>\alpha+1$. In Figure~\ref{pics:tramax_m1_Beta11} one can observe the absence of a condensate for $U(0,\,1)$-distributed increments and $m=1$. In Figure~\ref{pics:tramax_m5_Beta119} a condensate appears for the increment distribution $Beta(1,\,1.9)$ when $m=2$. Note finally in Figure~\ref{fig:phase_trans_nostro} that there is no condensation for
$Beta(3,\,1)$-distributed increments for $m=5$. Since in this case $\mu_\eps=3/4>1/2$ the threshold in Proposition~\ref{prop:force_cond} seems to be binding in $R2$ as well.
\newpage
\begin{figure}[ht!]
\centering
\includegraphics[width=.25\textwidth]{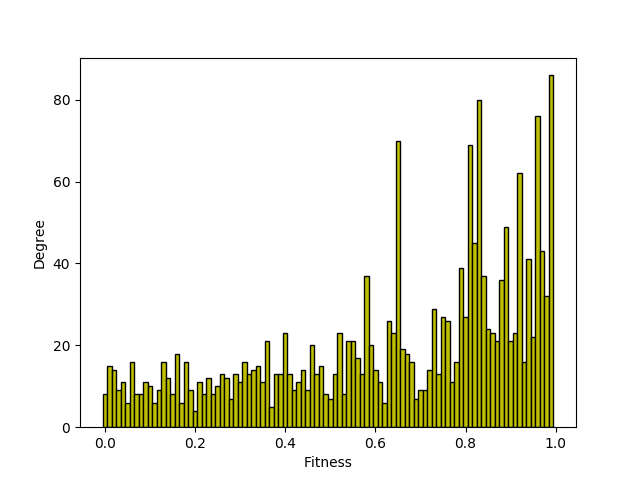}\quad
\includegraphics[width=.25\textwidth]{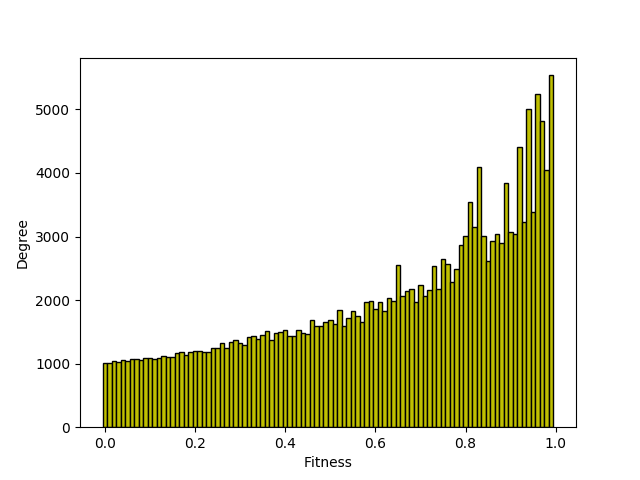}\quad
\includegraphics[width=.25\textwidth]{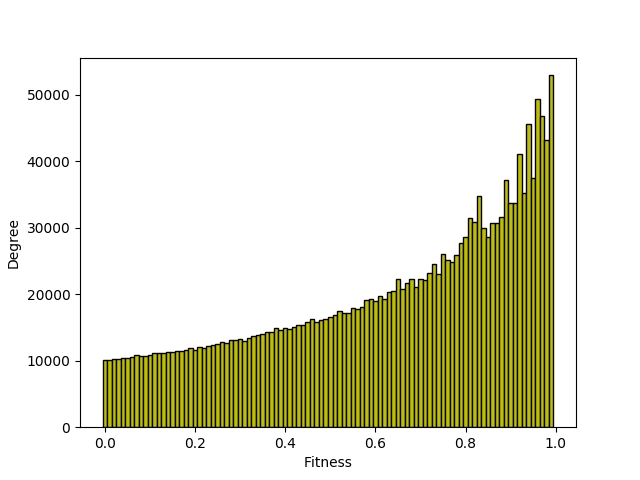}

\caption{Cumulative degree in fitness intervals for Regime~$R2$ with $m=1$, $\epsilon\sim U(0,\,1)$. Note that the degree distribution per fitness becomes smoother at the right endpoint of the fitness distribution. This model is known not to exhibit condensation. 
}
\label{pics:tramax_m1_Beta11}
\end{figure}
\begin{figure}[ht!]
\centering
\includegraphics[width=.25\textwidth]{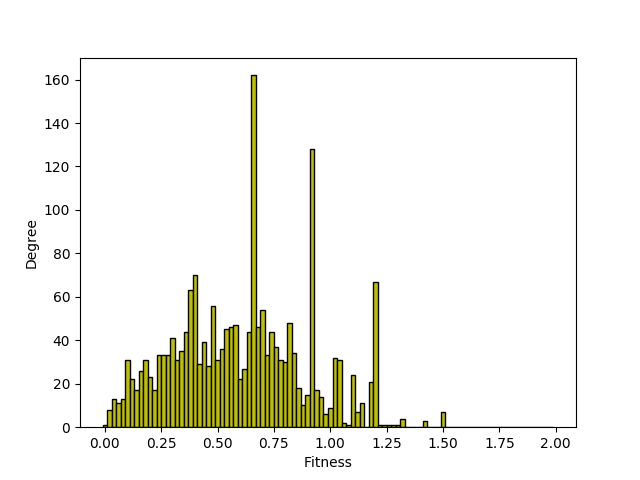}\quad
\includegraphics[width=.25\textwidth]{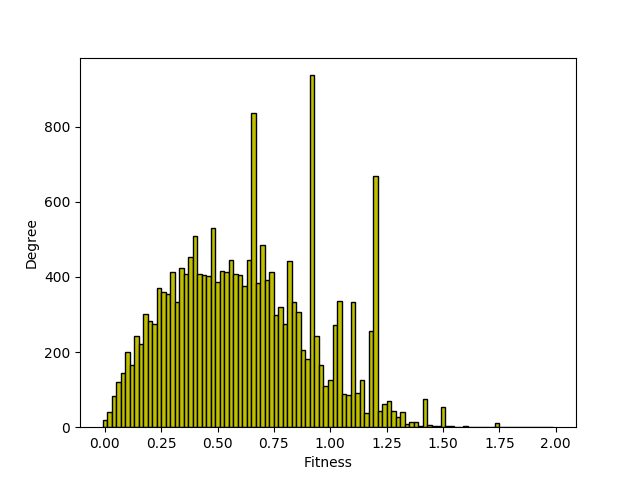}\quad
\includegraphics[width=.25\textwidth]{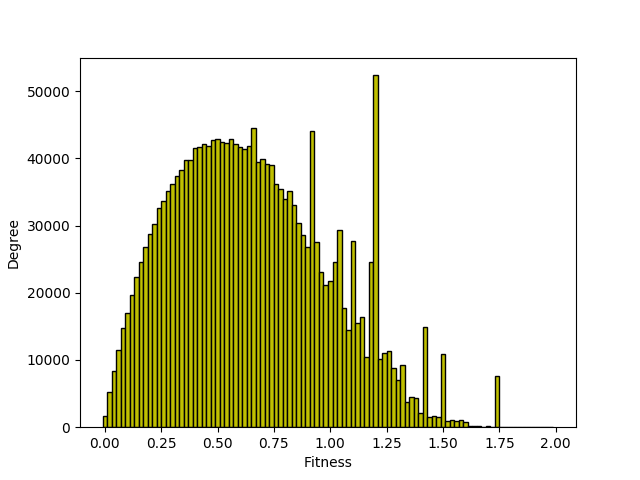}

\caption{Cumulative degree in fitness intervals for Regime~$R2$ with $m=2$ with $\epsilon\sim Beta(1,\,1.9)$. In this case, higher spikes appear as the graph size increases towards the upper endpoint of the fitness distribution. Recall that the classical Bianconi--Barab\'asi model does not condensate for $Beta(1,\,1.9)$-distributed fitness. 
}
\label{pics:tramax_m5_Beta119}
\end{figure}

\begin{figure}[ht!]
\centering
\includegraphics[width=.25\textwidth]{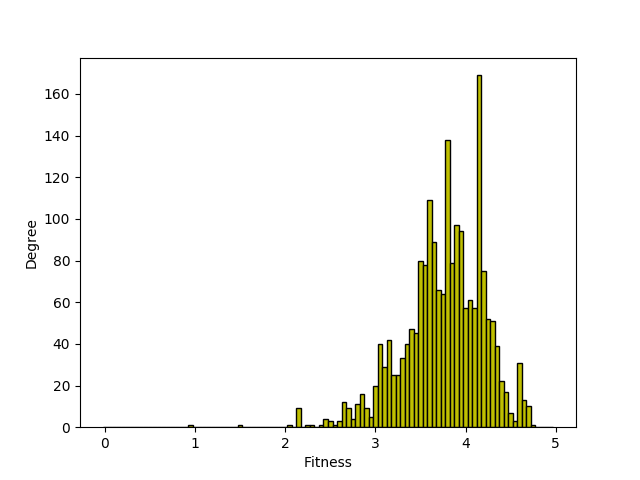}\quad
\includegraphics[width=.25\textwidth]{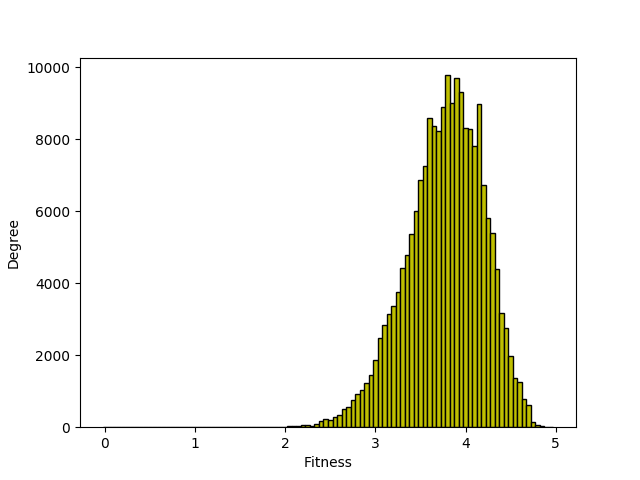}\quad
\includegraphics[width=.25\textwidth]{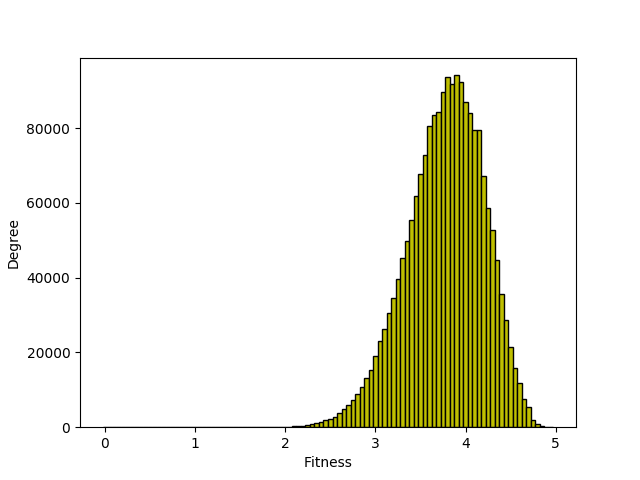}

\caption{Cumulative degree in fitness intervals for Regime~$R2$ with $m=5$, $\epsilon\sim Beta(3,\,1)$. The degree distribution per fitness appears without peaks as the graph increases and no condensates forms.
}
\label{fig:phase_trans_nostro}
\end{figure}
\newpage
\subsection{Condensation in Regime $R3$}
The case in which $m(t)\gg 1$ presents an interesting open problem which is illustrated in the following simulations.

In Figures~\ref{pics:m_log_t}-\ref{pics:m_t} we are plotting the cumulative degree for nodes grouped by fitness with $U(0,\,1)$ increments in the regimes $m=\floor{\log t},\,\floor{\sqrt t},\,t$ respectively. As one can see, the limiting fitness distribution resembles the cumulative fitness distribution for the first two cases, while in the $m=t$ regime a spike appears at $\mu_\eps t$.

As stated at the beginning of Subsection~\ref{subsec:cond}, in this kind of plots condensation is indicated by the presence of spikes around the supremum of the fitness. Looking at Figures~\ref{pics:m_log_t}-\ref{pics:m_sqrt_t} there are no peaks and the landscape of the empirical fitness distribution resembles the cumulative degree grouped by fitness. On the other hand, in Figure~\ref{pics:m_t} the two quantities are different: the cumulative degree by fitness exhibits a spike roughly around $\mu_\eps t$, $t=100000$, and the empirical fitness distribution seems to be uniform in $[0,\,\mu_\eps t].$ Heuristically, a uniform law appears because by summing a linear number of increments the central limit theorem kicks in, so that each node has roughly a Gaussian fitness. More precisely, for most nodes $i$ the fitness is close in law to $\mathcal N(\mu_\eps (t-i),\sigma^2_\eps(t-i))$, where $\sigma^2_\eps$ is the variance of the increments. Therefore, by Gaussian concentration properties around the mean, we see a fitness landscape close to a uniform in $[0,\,\mu_\eps t].$

Based on the above considerations, we expect that the speed at which $m(t)$ grows in time influences the appearance of a condensate. Namely, if $m(t)$ is too slow, condensation cannot be enforced, while a faster $m(t)$ leads to Bose--Einstein condensation. Because of the scaling of the central limit theorem, we conjecture $m(t)=\Theta( t)$ to be the threshold for condensation in $R3$.

This is summarised in the following conjecture:
\begin{conj}
If $m(t)=\Theta( t)$ $R3$ exhibits condensation with the cumulative fitness distribution having an atom at $\mu_\eps t.$
\end{conj}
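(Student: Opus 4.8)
\emph{Proof strategy.} The essential point is that fitnesses now grow linearly, so the condensate in the statement is an atom at $\mu_\eps$ of the \emph{rescaled} weighted fitness distribution $\widehat\Xi_t:=\frac1{2t}\sum_{v}\deg_t(v)\,\delta_{\F_t(v)/t}$ (the version of $\Xi_t$ rescaled by $t$), and the functional~\eqref{eq:cond_cond} must correspondingly be read with a \emph{moving} threshold $\tilde h=\theta\mu_\eps t$, $\theta\in(0,1)$. This is exactly what distinguishes the claim from a vacuous one: with a \emph{fixed} $\tilde h$ and $\F_t\to\infty$ a.s.\ every vertex but the $O(1)$ youngest eventually clears the bar, so $M_{\tilde h}/t\to 2$ and~\eqref{eq:cond_cond} holds for free (the degenerate condensation flagged after~\eqref{eq:cond_cond}), whereas with the moving bar $\theta\mu_\eps t$ the field of eligible vertices shrinks with $t$ and a genuine concentration of degree must be proved. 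Taking $m(t)=t$ (Case~\ref{item:c_m}, for which the top fitness is $\mu_\eps t$ and to which the stated atom location refers), the goal becomes
\begin{equation*}
\lim_{\theta\uparrow1}\ \liminf_{t\to\infty}\ \frac1t\,\EE\!\left[\sum_{v:\,\F_t(v)\ge\theta\mu_\eps t}\deg_t(v)\right]>0,
\end{equation*}
i.e.\ that a \emph{vanishing} fraction of the oldest vertices still carries a share of the total degree $\sum_v\deg_t(v)=2(t-1)$ that stays bounded below as $\theta\uparrow1$.

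\emph{Step 1 (fitness landscape).} As $\F_t(v)=\sum_{i=v+1}^{t}\epsilon_i(v)$ is a sum of $t-v$ i.i.d.\ increments, the strong law gives $\F_t(v)/t\to\mu_\eps(1-v/t)$ with CLT fluctuations of order $t^{-1/2}$. A maximal-inequality version, uniform over $v=\lfloor st\rfloor$, $s\in[0,1]$, shows that the \emph{unweighted} law $\frac1t\sum_v\delta_{\F_t(v)/t}$ converges to the uniform distribution on $[0,\mu_\eps]$ (the uniform landscape observed numerically) and, crucially, that for each fixed $\theta$ the level set $\{v:\F_t(v)\ge\theta\mu_\eps t\}$ coincides up to $o(t)$ vertices with the block of oldest vertices $\{v\le(1-\theta)t\}$. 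This turns the display above into a purely degree-theoretic statement about that old block.

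\emph{Step 2 (degree profile and self-consistency).} Bypassing the branching embedding---unavailable because the rates $\F_s(v)$ change at every birth---I would treat the degrees directly. From~\eqref{eq:attach_rule}, $\EE[\deg_{t+1}(v)\mid\sigma(G_t)]=\deg_t(v)\big(1+\F_{t+1}(v)/Z_{t+1}\big)$, so the predictable drift of $\log\deg_t(v)$ is $\sum_{s>v}\F_{s+1}(v)/Z_{s+1}$; feeding in $\F_{s+1}(v)\approx\mu_\eps(s-v)$ from Step~1 and $Z_s\asymp s^2$ from Lemma~\ref{lem:ristorante}, a Freedman-type concentration yields the mean-field law
\begin{equation*}
\frac{\de}{\de t}\log x(t,v)=\frac{\mu_\eps(t-v)}{Z_t},\qquad Z_t\sim\kappa\,t^2,
\end{equation*}
whose bulk solution is $x(t,v)\approx(t/v)^{\gamma}e^{-\gamma(1-v/t)}$ with $\gamma=\mu_\eps/\kappa$. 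The closure is that a condensate of total degree $\rho t$ on the $o(t)$ oldest vertices has fitness $\approx\mu_\eps t$ and hence contributes $\mu_\eps t\cdot\rho t$ to $Z_t$ \emph{at the same order} as the bulk, so $(\gamma,\rho)$ is pinned by the two scalar equations
\begin{equation*}
\gamma\big(J(\gamma)+\rho\big)=1,\qquad K(\gamma)+\rho=2,
\end{equation*}
where $J(\gamma)=\int_0^1(1-u)u^{-\gamma}e^{-\gamma(1-u)}\de u$ comes from $Z_t=\sum_v\F_t(v)\deg_t(v)$ and $K(\gamma)=\int_0^1 u^{-\gamma}e^{-\gamma(1-u)}\de u$ from the exact identity $\sum_v\deg_t(v)=2(t-1)$. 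These integrals converge only for $\gamma<1$, so any overflow of degree at the origin is forced into $\rho$; condensation is precisely the assertion that the system is solved with $\rho>0$, in which case $\widehat\Xi_t$ converges to an absolutely continuous bulk on $[0,\mu_\eps)$ plus a Dirac mass of weight $\rho/2$ at $\mu_\eps$.

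\emph{Main obstacle.} The hard part is to make this self-consistent mean-field rigorous and, above all, to establish the \emph{strict} positivity of $\rho$. Two difficulties compound. First, as noted after Theorem~\ref{thm:cond}, the rates $\F_s(v)$ vary at every birth and are correlated across vertices through the shared clock, so $(\deg_t(v))_v$ is non-Markovian and neither the P\'olya-urn nor the CTBP representation is available; the feedback $Z_t\leftrightarrow(\deg_t(v))_v$ must be closed by hand, which already demands sharpening Lemma~\ref{lem:ristorante} from $\asymp$ to two-sided control of $Z_t/t^2$ with vanishing relative error. Second, the condensate lives on the $O(1)$ to $o(t)$ oldest vertices, precisely where the continuum profile is least reliable and per-vertex fluctuations are largest, so a plain second-moment estimate cannot separate the $\rho>0$ solution from the degenerate $\rho=0$ one. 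I would attack this with a majorization squeeze on the old block---bounding its cumulative degree from below by comparison against configurations ordered as in~\eqref{claim:appendix}, exploiting that the fitness $\approx\mu_\eps t$ of these vertices makes their attachment self-reinforcing through~\eqref{eq:attach_rule} (cf.\ Proposition~\ref{item:nr_tre})---so as to keep an $\Omega(t)$ degree on the old block uniformly in $\theta$.
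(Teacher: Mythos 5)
This statement is a \emph{conjecture} in the paper: the authors offer no proof, only the central-limit-theorem heuristic (each node's fitness is approximately $\mathcal N(\mu_\eps(t-v),\sigma^2_\eps(t-v))$, hence a fitness landscape close to uniform on $[0,\mu_\eps t]$) and the simulations with $m(t)=t$. Against that background, your Step~1 is essentially the paper's own heuristic made slightly firmer, and your preliminary observation is a genuine improvement on the paper's framing: you correctly note that with $\F_t(v)\to\infty$ a.s.\ the fixed-threshold criterion \eqref{eq:cond_cond} is satisfied vacuously (the same degeneracy the paper flags for \eqref{eq:fit_useless}), so the conjecture only has content after rescaling, i.e.\ as an atom of the rescaled weighted measure at $\mu_\eps$ with a moving threshold $\theta\mu_\eps t$. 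That reformulation is faithful to what the conjecture must mean and is a necessary first step any proof would need.

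However, as a proof the proposal has a genuine gap, and it sits exactly where the conjecture lives. Your Step~2 sets up a self-consistent mean-field system --- the profile $x(t,v)\approx(t/v)^{\gamma}e^{-\gamma(1-v/t)}$ together with the two scalar equations $\gamma(J(\gamma)+\rho)=1$ and $K(\gamma)+\rho=2$ --- but you never analyze this system: you do not show that it admits a solution with $\rho>0$, which \emph{is} the assertion to be proved. Eliminating $\rho$ gives a single equation for $\gamma$, and condensation then hinges on the sign of $2-K(\gamma^*)$ at its root; nothing in your write-up rules out $2-K(\gamma^*)\le 0$, in which case the same framework would predict $\rho=0$ and no condensate, so even at the heuristic level the argument is not closed. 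Beyond that, the steps you list as ``main obstacles'' --- upgrading Lemma~\ref{lem:ristorante} from $\asymp$ with unidentified constants to a sharp law $Z_t/t^2\to\kappa$ (without which $\gamma=\mu_\eps/\kappa$ is not even well defined), closing the non-Markovian feedback between $Z_t$ and the degrees, and separating $\rho>0$ from $\rho=0$ on the $o(t)$ oldest vertices where per-vertex fluctuations are largest --- are precisely the mathematical content the paper says is out of reach of current CTBP/urn techniques; naming them and gesturing at a ``majorization squeeze'' does not resolve them. In short: what you have is a plausible research program that refines the paper's heuristic, not a proof of the conjecture.
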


\begin{figure}[ht!]
\centering
\includegraphics[width=.3\textwidth]{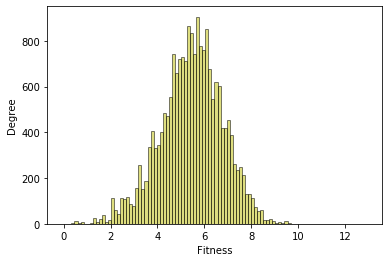}\quad
\includegraphics[width=.3\textwidth]{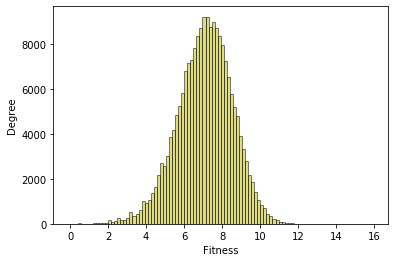}
\includegraphics[width=.3\textwidth]{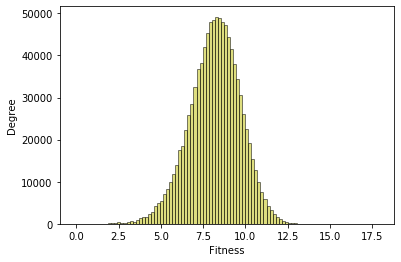}\quad
\medskip

\includegraphics[scale=.35]{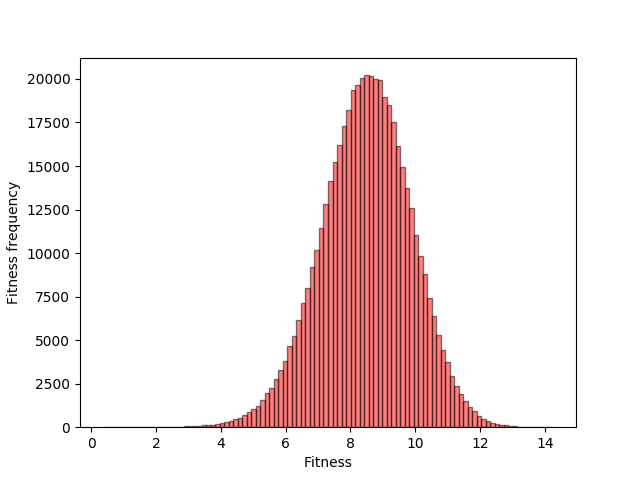}
\caption{Cumulative degree grouped by fitness in Regime~$R3$ with $m=\floor{\log_2{t}}$ with $\epsilon\sim U(0,\,1)$ 
Bottom picture: empirical fitness distribution corresponding to the last cumulative degree plot. 
Notice that the empirical fitness distribution resembles the cumulative degree by fitness. The plots exhibit no spike.
}
\label{pics:m_log_t}
\end{figure}
\begin{figure}[ht!]
\centering
\includegraphics[width=.3\textwidth]{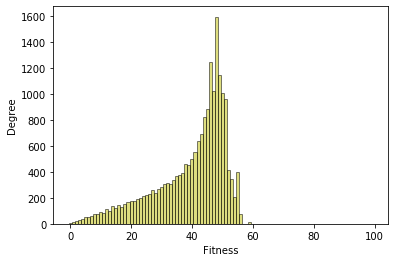}\quad
\includegraphics[width=.3\textwidth]{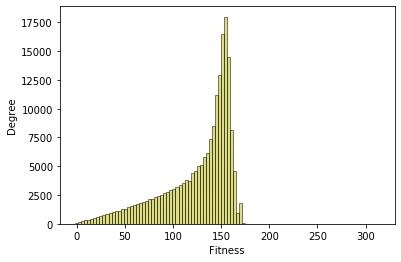}
\includegraphics[width=.3\textwidth]{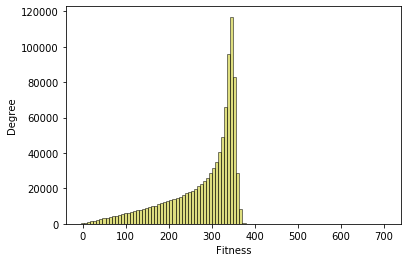}

\medskip

\includegraphics[scale=.35]{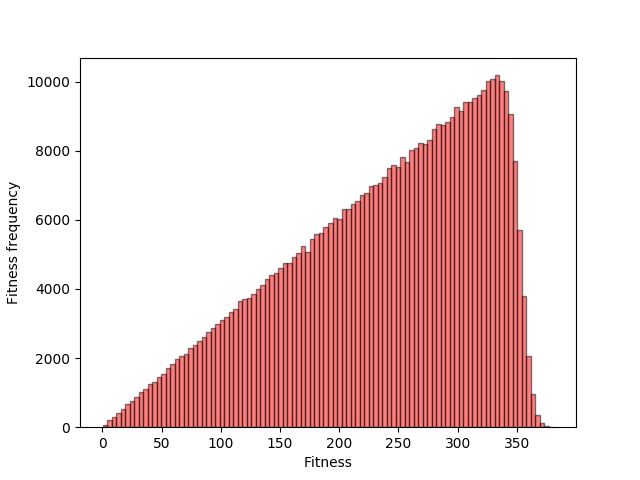}

\caption{Cumulative degree grouped by fitness in Regime~$R3$ with $m=\floor{\sqrt t}$ with $\epsilon\sim U(0,\,1)$ 
Bottom picture: empirical fitness distribution corresponding to the last cumulative degree plot. 
Notice that the empirical fitness distribution resembles the cumulative degree by fitness. The plots exhibit no spike.}
\label{pics:m_sqrt_t}
\end{figure}

\begin{figure}[ht!]
\centering
\includegraphics[width=.3\textwidth]{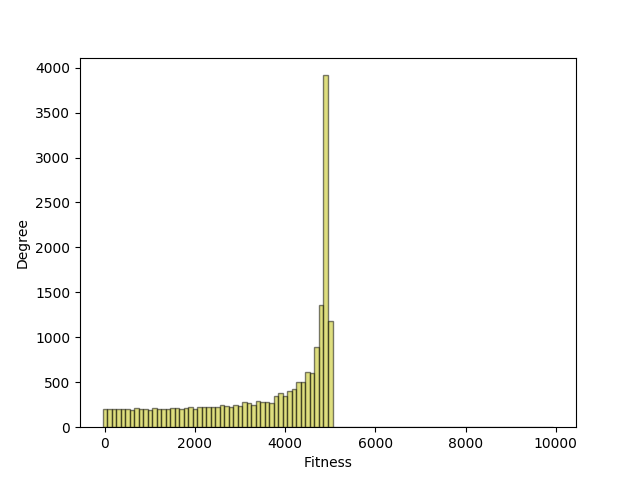}\quad
\includegraphics[width=.3\textwidth]{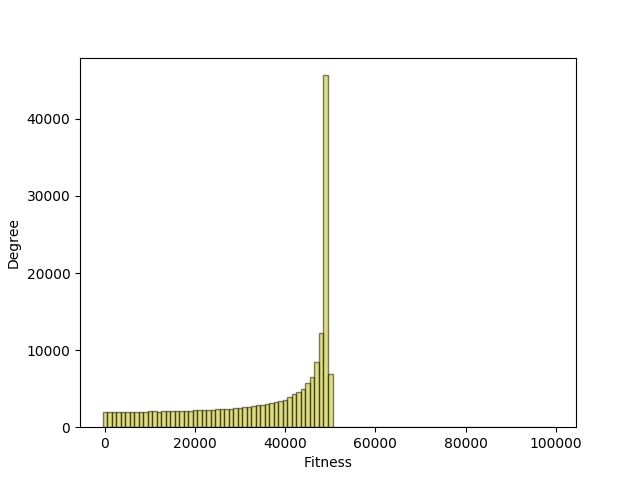}

\medskip

\includegraphics[scale=.3]{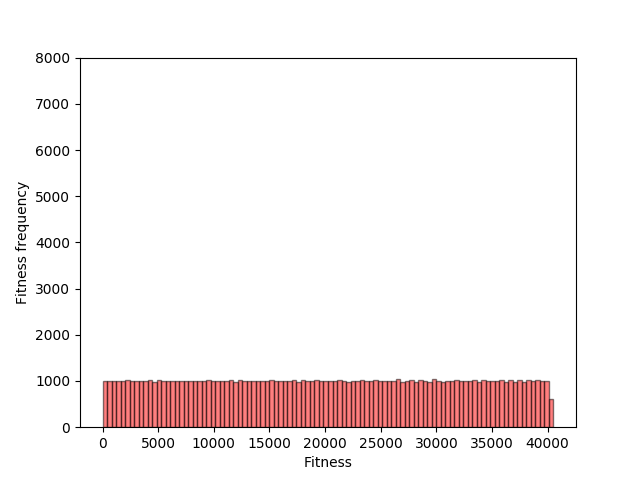}

\caption{Cumulative degree grouped by fitness in Regime~$R3$ with $m=t$ with $\epsilon\sim U(0,\,1)$. Bottom picture: empirical fitness distribution corresponding to the last cumulative degree plot. As expected, the fitness distribution is uniform in $[0,\,\mu_\eps t]$. In particular $\mu_\eps=1/2,$ $t=100000$, and $\mu_\eps t\approx 50000.$ }
\label{pics:m_t}
\end{figure}
We expect our results to be universal regardless of the increment distribution. In order to properly address this topic, we need to identify two fitness families. \cite{mailler2019competing} propose two fitness categories in the context of competing growth processes and dynamical networks. The difference arises essentially in the behavior at the maximal fitness value (regular variation vs. exponential behavior) that implies a different treatment of the two regarding condensation. The families are:
\begin{enumerate}[label=\roman*),ref=\roman*),leftmargin=*]
    \item bounded random variables in the maximum domain of attraction of the Weibull distribution eg. Beta distribution~\parencite[Section~3.3.2]{embrechts2013modelling};
    \item\label{item:Gumbel} bounded random variables in the maximum domain of attraction of the Gumbel distribution eg. those with survival function~\parencite[Section~3.3.3]{embrechts2013modelling}. \begin{equation}\label{eq:eps_G}\overline F(x)=\exp(-x/(1-x)),\quad x\in[0,\,1].\end{equation}
\end{enumerate}

So far we have used in our simulations Beta-distributed increments. In order to better support our claims, we provide a realisation of our model in $R3$ with increments belonging to Class~\ref{item:Gumbel}. Namely, we will use increments distributed as~\eqref{eq:eps_G}. As one can notice, the behavior shown in Figure~\ref{pics:m_t_vonMises} is qualitatively similar to Figure~\ref{pics:m_t}. The intuition behind this is that the central limit theorem works regardless of the initial increment distribution.
\begin{figure}[ht!]
\centering
\includegraphics[width=.3\textwidth]{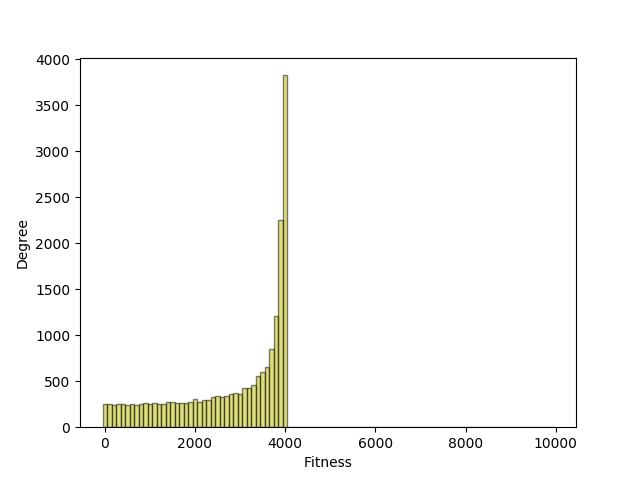}\quad
\includegraphics[width=.3\textwidth]{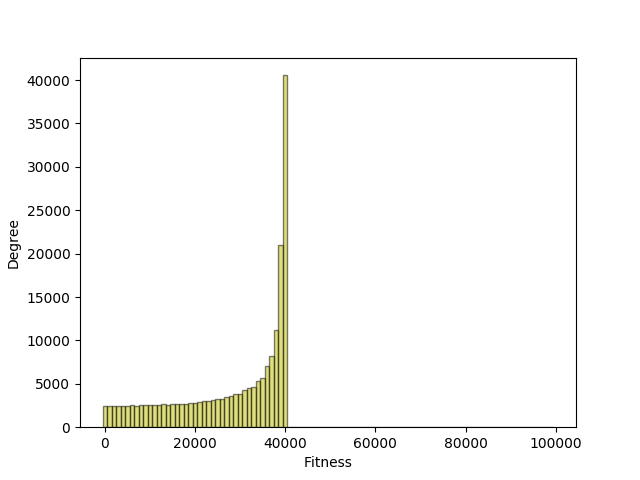}

\medskip

\includegraphics[scale=.3]{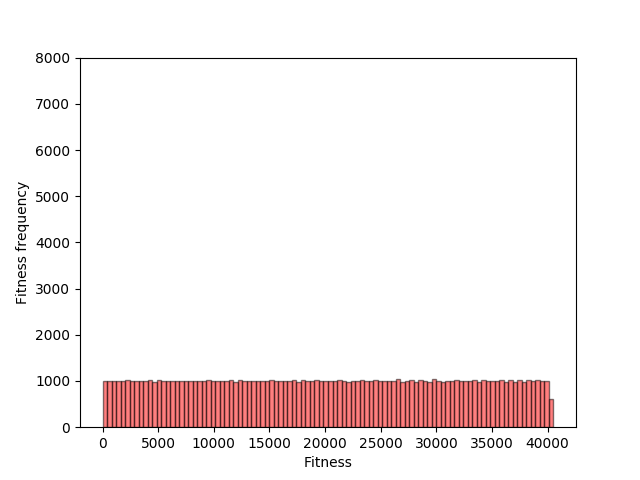}

\caption{Cumulative degree grouped by fitness in Regime~$R3$ with $m=t$ with increments distributed as~\eqref{eq:eps_G}. Bottom picture: empirical fitness distribution corresponding to the last cumulative degree plot. As expected, the fitness distribution is uniform in $[0,\,\mu_\eps t]$. In particular $\mu_\eps=1/2,$ $t=100000$, and $\mu_\eps t\approx 50000.$ }
\label{pics:m_t_vonMises}
\end{figure}
\section{Concluding remarks}\label{sec:concl}
As mentioned in the Introduction most the main tools (urn models, continuous-time branching processes etc) developed to analyze preferential attachment models with fitness are still not able to treat dynamical fitness models. This is why the behavior of these models poses an interesting mathematical challenge. In this paper we started the investigation, both mathematical and empirical, of these models. We believe that a rigorous analysis of dynamical fitness preferential attachment graphs can shed light on the existence of universality classes for random graphs. In particular this can justify the use of the Barab\'asi--Albert and the Bianconi--Barab\'asi models in applications. As an example, since the attachment probability of a graph is hard to estimate, knowing that the benchmark models are robust under bounded fluctuations of the fitness makes them suitable to fit observed networks. 

Finally, a rigorous study of Regime~$R3$ is advocated. The reason for this is that it creates a new universal model where the phase transition seems not to depend anymore on the fitness but rather on the speed of the fitness growth. 
\clearpage
\printbibliography

\end{document}